\theoremstyle{thmstyleone}%
\newtheorem{theorem}{Theorem}
\newtheorem{proposition}[theorem]{Proposition}%
\theoremstyle{thmstyletwo}%
\newtheorem{remark}{Remark}%
\newtheorem{lemma}{Lemma}
\theoremstyle{thmstylethree}%
\newcommand{\assign}{:=}
\newcommand{\nobracket}{}
\newcommand{\tmop}[1]{\ensuremath{\operatorname{#1}}}
\newcommand\numberthis{\addtocounter{equation}{1}\tag{\theequation}}
\def\off2{\textrm{off}_2}
\def\cD{\mathcal{D}}
\def\CC{\mathbb{C}}
\DeclarePairedDelimiter\abs{\lvert}{\rvert}
\def\dis{\displaystyle}
\begin{document}

\title[Newton-Type Methods For Simultaneous Matrix Diagonalization]{Newton-Type Methods For Simultaneous Matrix Diagonalization}


\author*[1]{\fnm{Rima} \sur{Khouja}}\email{rima.khouja@inria.fr}
\equalcont{These authors contributed equally to this work.}
\author[1]{\fnm{Bernard} \sur{Mourrain}}\email{bernard.mourrain@inria.fr}
\equalcont{These authors contributed equally to this work.}

\author[2]{\fnm{Jean-Claude} \sur{Yakoubsohn}}\email{yak@mip.ups-tlse.fr}
\equalcont{These authors contributed equally to this work.}

\affil*[1]{\orgdiv{AROMATH,
 INRIA Sophia Antipolis M{\'e}dit{\'e}rann{\'e}e},  \orgaddress{\street{2004, route des Lucioles}, \city{Sophia Antipolis}, \postcode{06902}, \country{France}}}

\affil[2]{\orgdiv{Institut de Math{\'e}matiques de Toulouse}, \orgname{Universit{\'e} Paul Sabatier}, \orgaddress{\street{118, route de Narbonne}, \city{Toulouse}, \postcode{31062},  \country{France}}}


\abstract{This paper proposes a Newton-type method to solve numerically the eigenproblem of several diagonalizable matrices, which pairwise commute. A classical result states that these matrices are simultaneously diagonalizable. From a suitable system of equations associated to this problem, we construct a sequence that converges quadratically towards the solution. This construction is not based on the resolution of a linear system as is the case in the classical Newton method. Moreover, we provide a theoretical analysis of this construction and exhibit a condition to get a quadratic convergence.  We also propose numerical experiments, which illustrate the theoretical results.}

\keywords{Simultaneous diagonalization, Newton-type method, eigenproblem, eigenvalues, certification, high precision computation}

\pacs[MSC Classification]{65F15, 65H10, 15A18, 65-04}



\maketitle

\section{Introduction}

\subsection{Our study}

Let us consider $p$ {\em diagonalizable} matrices $M_1, \cdots, M_p$ in
$\CC^{n \times n}$ which pairwise commute. A classical result  states
that these matrices are simultaneously diagonalizable, i.e., there exists
an invertible matrix $E$ and diagonal matrices $\Sigma_i$, $1 \leqslant
i \leqslant p$, such that $EM_i E^{- 1} = \Sigma_i$, $1 \leqslant
i \leqslant p$, see e.g. \cite{HJ12}. The aim of this paper  is to compute numerically a solution $(E,F,\Sigma)$ of the system of equations
\begin{eqnarray}
  f(E,F,\Sigma):=\left(\begin{array}{c}
    FE - I_n\\
    FME - \Sigma
  \end{array}\right) & = & 0  \label{eq1}
\end{eqnarray}
where $\Sigma=(\Sigma_1,\ldots,\Sigma_p)$ and $F M E-\Sigma:=(FM_1 E -
\Sigma_1,\ldots,FM_p E - \Sigma_p)=0$. Notice that this system is
multi-linear in the unknowns $E,F, \Sigma$. We verify that when $p=1$
and $M_{1}$ is a generic matrix, this system has a solution set of dimension
$2\,n^{2} +n - 2n^{2} = n$ ($n^2+n^2+n$ unknowns for $E,F,\Sigma$ and $2$ matrix equations corresponding to $n^2+n^2$ equations). However, for $p>1$ and generic
matrices $M_{i}$, there is no solution. To have a solution, the pencil
$M$ must be on the manifold $\cD_{p}$ of $p$-tuples of simultaneously
diagonalizable matrices.

The system \eqref{eq1} can be generalized to the following system:
\begin{eqnarray}
  f'(E,F,\Sigma'):=\left(\begin{array}{c}
    FM_{0}E - \Sigma_{0}\\
    FME - \Sigma
  \end{array}\right) & = & 0  \label{eqh2}
\end{eqnarray}
where  $\Sigma'=(\Sigma_{0},\Sigma_1,\ldots,\Sigma_p)$, $M_{0}\in \CC^{n\times n}$ is replacing $I_{n}$ and $\Sigma_{0}$ is
a diagonal matrix replacing $I_{n}$ in the first equation of \eqref{eq1}.
When the pencil $M'=(M_{0}, M_{1}, \ldots, M_{p})$ contains an invertible
matrix, the solutions of the two systems are closely related. If
$M_{0}$ is invertible, a
solution $(E,F,\Sigma')$ of \eqref{eqh2} for $M'=(M_{0}, M_{1}, \ldots,
M_{p})$ gives the solution $(F M_{0}, E \Sigma_{0}^{-1}, \Sigma
\Sigma_{0}^{-1})$ of \eqref{eq1} for $M = (M_{0}^{-1} M_{1}, \ldots,
M_{0}^{-1} M_{p})$.
A similar correspondence between the solution sets can be obtained if
a linear combination $M_{0}'= \sum_{i=1}^{p} \lambda_{i} M_{i}$ is
invertible.

As \eqref{eqh2} can be seen as an homogeneisation
of \eqref{eq1} and appears in several contexts and applications, we
will also study Newton-type methods for this homogenized system.

To solve the system of equations \eqref{eq1}, we propose to apply a
Newton-like method and to analyze the Newton map associated to an iteration. These ideas have also been developed for instance in \cite{MAHONY199667} where a Newton
 method is used for the symmetric eigenvalue problem. A Simultaneous Newton's iteration
for ill-conditioned eigenproblem has been introduced in \cite{chatelin}. For more recent references using the Newton-type approach for eigenproblem see for instance \cite{joho2002joint,sato2017riemannian,joho2008newton}. Moreover, similar approach for the fast computation of the singular value decomposition has been presented in a technical report \cite{JVDHYak}.

We say that we have a quadratic sequence associated to a system of equations if the sequence converges quadratically towards a solution.

The classical Newton map  defines $(E+X,F+Y,\Sigma+S)$ from $(E,F,\Sigma)$
in order to cancel the linear part in the Taylor expansion of
$f(E+X,F+Y,\Sigma+S)$. An easy computation shows that the
perturbations $X$, $Y$ and $S$ are solutions of such a Sylvester-type linear system
\begin{eqnarray}
  \left(\begin{array}{c}
    FE - I_n+FX+YE\\
    F M E - \Sigma-S+XMF+EMY
  \end{array}\right) & = & 0.  \label{eq1_bis}
\end{eqnarray}
A straight-forward way to solve this linear system is via Kronecker product, see \cite{HJ91}. This leads to a linear system
of size $2n^2$, which can be solved in $\mathcal{O}(n^6)$ arithmetic operations.

The construction of the methods studied here is based on  perturbations of such type $(E(I_n+X), (I_n+Y) F, \Sigma +S)$ rather than  $(E+X,F+Y,\Sigma+S)$. More precisely the perturbations $X$, $Y$ and $S$ that we consider are perturbations which
cancel the linear part of the Taylor expansion of
$f(E(I_n+X),(I_n+Y)F,\Sigma+S)$. In this case, we can produce explicit solutions
for the linear system in $X$, $Y$ and $S$ given by:
\begin{eqnarray}
  \left(\begin{array}{c}
    Z+X+Y\\
    \Delta-S+\Sigma X+Y\Sigma
  \end{array}\right) & = & 0.  \label{eq1_ter}
\end{eqnarray}
where $Z = F E - I_{n}$ and $\Delta = F M E - \Sigma$.
We will see that the linear system \eqref{eq1_ter} admits an explicit solution
$(X,Y,S)$ with respect to $Z$ and $\Delta$ for $p=1,2$ in \eqref{eq1}. This is because $\Sigma$ is a diagonal matrix. From these considerations, we define and analyze a sequence that converges quadratically
towards a solution of the system \eqref{eq1} without inverting a linear
system at each step of this Newton-like method.
%
\subsection{Related works} Simultaneous matrix diagonalization  is
required by many algorithms as it was pointed out in \cite{laub1987computation,BBM92,vollgraf2006quadratic,jiang2016simultaneous,flury1994simultaneous}. A
numerical analysis  for two normal commuting matrices is proposed in
\cite{BBM93} using Jacobi-like methods. Their method adjusts the
classical Jacobi method in successively solving $\frac{n(n-1)}{2}$
two-real-variables optimization problems at each sweep of the
algorithm. Their main result states a local quadratic convergence and
can be summarized in the following way. Let $\off2 (A,B)^2=\sum_{i\ne j}\abs{A_{i,j}}^2+\abs{B_{i,j}}^2$. Let $\{\alpha_1, \dots, \alpha_n\}$ (resp. $\{\beta_1, \dots, \beta_n\}$) be the set of the eigenvalues of $A$ (resp. $B$). Let $A^k$ and $B^k$ the matrices obtained at the step $k$ of the Jacobi-like method and $\rho_k=\off2 (A^k,B^k)$. If
$$\rho_0<\frac{1}{2}\delta:=\frac{1}{4}\min_{i\ne j}{(\abs{\alpha_i-\alpha_j}, \abs{\beta_i-\beta_j})},$$
then
$$\rho_{k+1}<2n(9n-13)\,\frac{\rho^2_k}{\delta}.$$
We will see in Theorems \ref{theo2} and~\ref{th-quad-conv} that
the local conditions of the quadratic convergence do not depend on
$n$. Many other papers studied the so-called Jacobi-like methods (see
e.g. ~\cite{luc-alb},~\cite{mes-bel} and references therein).

In \cite{Hoeven} a sequence with proof of its convergence towards a
numerical solution of the system \eqref{eq1} when $p=1$, i.e., for
$M_1$, with the assumption of $M_1$ being a diagonalizable matrix, is
presented. It requires matrix inversion. Furthermore, under some extra
assumptions, its quadratic convergence is established.

For a pencil of real {\em symmetric} matrices $C=(C_1, \ldots, C_s)$, several
algorithms based on Riemannian optimization methods (see
\cite{AbsMahSep2008}) have been developed in order to find an {\em
  approximate joint diagonalizer} (see
e.g. \cite{bouchard,absil1,alg1,JM}). The idea is to find a local
minimizer $B\in\mathbb{R}^{n\times n}$ of an objective function $f$
which measures the degree of non-diagonality of the pencil $(BC_1B^T,
\ldots, BC_sB^T)$ over a Riemannian manifold (see
\cite{objfunc,bouchard,Afsari} for some examples of objective
functions). This Riemannian manifold is defined according to the
geometric constraints considered on $B$. For instance, the
diagonalizer is supposed to be orthogonal in some of these algorithms
after a pre-whitening step (see
e.g. \cite{blind,blind1,blind3,alg1,alg2,JM,alg3,alg4}). Due to
inaccuracies in the computation of the diagonalizer with orthogonality
constraints (see. \cite{inacc}), {\em oblique} constraints, i.e., all the rows of
the diagonalizer have unit Euclidean norm, have also been considered instead of the former constraints in more recent works
(see e.g. \cite{absil1,bouchard}). These algorithms can
be used when the pencil of symmetric matrices is simultaneously
diagonalizable. In this case, we aim to find a zero of the objective
function $f$. However, these algorithms have a computation
complexity higher than the Newton-type algorithm that we propose (see
Proposition \ref{complexity}). For instance, most of them  combine line search
\cite[Ch4]{AbsMahSep2008} or trust region \cite[Ch7]{AbsMahSep2008}
methods, and matrix inversions at each iteration (see the exact
Riemannian Newton iteration in \cite{absil1}). Moreover, the points on
the Riemannian manifold are updated using a retraction operator (see
\cite[Ch4]{AbsMahSep2008} or  \cite{bouchard} for an example of a
retraction operator on the oblique manifold). In the Newton-type
method described in \Cref{sec-p=1,sec-p=2} the points are updated by
using direct and explicit formulas. They have lower complexity than the Riemannian optimization-based algorithms and they are well-adapted to computation with high precision.

Simultaneous matrix diagonalization appears in many
applications.
For instance, in the solution of multivariate polynomial equations by algebraic
methods, the isolated roots of the system are obtained from the
computation of common eigenvectors of commuting operators of
multiplication in the quotient ring and from their eigenvalues
\cite{CoxUsingalgebraicgeometry2005},
\cite{elkadi_introduction_2007}. In the case of simple roots, this
reduces to simultaneous diagonalization of a pencil of matrices.

The approach of approximate joint diagonalizer for a pencil of real
{\em symmetric} matrices is used to solve Blind Source Separation
(BSS) problem, with potential applications in wide domains of
engineering (see e.g. \cite{BSS}).

Simultaneous matrix diagonalization of pencils of general matrices
also appears in the rank (or canonical) decomposition of tensors
\cite{lath06}. Under certain conditions this rank decomposition is
unique \cite{si-bro}. In this case simultaneous matrix diagonalization
allows to compute this rank decomposition which plays a crucial role
in numerous applications such that Psychometric \cite{ca-ch}, Signal
Processing and Machine Learning \cite{ci-ma}, \cite{si-lath}, Sensor
array processing \cite{so-do}, Arithmetic Complexity \cite{bu13}, wireless communications \cite{sovan}, multidimensional
harmonic retrieval \cite{so-lath17-1}, \cite{so-lath17-2},
Chemometrics \cite{bro97}, and Principal components
analysis \cite{jolli}.
\subsection{Outline}

Our \emph{contributions} are a new iteration for the simultaneous diagonalization of matrices, with a local quadratic convergence and its analysis. The iteration is different from a Newton iteration. It does not require to invert a large linear system, but performs simple matrix operations.
We analyse the numerical behavior of the method and provide a certification test for the convergence.
Sections \ref{sec-inverse-pb}, \ref{sec-p=1}, \ref{sec-p=2}, and \ref{sec-cvg-family} are devoted to respectively constructing a sequence to solve numerically:
\begin{itemize}
\item $FE- I_n=0$,
\item the system \eqref{eq1} when $p=1$,
\item the system \eqref{eqh2} when $p=1$,
\item the system \eqref{eq1} for any $p$.
\end{itemize}
Moreover, we provide for these cases, a certification that the sequence converges to a nearby solution, and a test to detect when this
convergence is quadratic from an initial point. 
More precisely, in Section~\ref{sec-p=1} we show that a triplet $(E_0,F_0,\Sigma_0)$ must satisfy a property depending on the quantity $\varepsilon_0 := \max (\kappa_0^2 K_0^2\| Z_0 \|,
  \kappa_0^2K_0\| \Delta_0 \|)$ to get a quadratic convergence where
\begin{itemize}

\item[1--] $Z_0=F_0E_0-I_n$,
\item[2--] $\Delta_0=F_0ME_0- \Sigma_0$,
\item[3--]  $\displaystyle \kappa_0=\max \left( 1, \;
       \max_{1 \leqslant j < k \leqslant n}  \frac{1}{\abs{ \nobracket \sigma_{0,
       k^{}}^{} - \sigma_{0, j}^{} }\nobracket} \right)$,
\item[4--] $       K_0 = \max_k \left( 1, \; \abs{ \nobracket
       \sigma_{0, k}^{} }\nobracket \right),$
\end{itemize}
where $\sigma_{0, 1}^{}, \ldots, \sigma_{0, n}^{}$ denote the diagonal
  entries of $\Sigma_0^{}$. The quantity $\kappa$ is the condition number of the studied methods. Based on the same methodology as in Section~\ref{sec-p=1}, Sections~\ref{sec-p=2} and \ref{sec-cvg-family} exhibit a certification of the convergence of the sequence constructed to the studied case towards the solution with a sufficient condition on the initial point.
 In Section~\ref{sec-exp} we perform numerical experimentation. The final section is for our conclusions and future works.
\subsection{Notation and preliminaries}
Throughout this work, we will use the infinity vector norm and the
corresponding matrix norm. For a given vector $v \in \mathbb{C}^n$ and matrix
$M \in \mathbb{C}^{n \times n}$, they are respectively given by:
\begin{eqnarray*}
  \| v \| & = & \max \{ \abs{v_1}, \ldots, \abs{v_n} \}\\
  \| M \| & = & \max_{\| v \| = 1}  \| M v \| .
\end{eqnarray*}
Explicitly, $\| M \|  = \max \{\abs{m_{i,1}}+\ldots+\abs{m_{i,n}}: 1\le i\le n\}.$\\
For a second matrix $N \in \mathbb{C}^{n \times n}, \text{we have}$
\begin{eqnarray*}
  \| M + N \| & \leqslant & \| M \| + \| N \| ~\text{(sub-additivity)}\\
  \| M N \| & \leqslant & \| M \|  \| N \| ~\text{(sub-multiplicativity)}.
\end{eqnarray*}
Moreover, for a given matrix $M\in\mathbb{C}^{n\times n}$, we denote by $\|M\|_{\mathrm{L}, \mathrm{Tri}}$\\and $\|M\|_{Frob}$ the following:
$$\|M\|_{\mathrm{L}, \mathrm{Tri}}:=\max_{\substack{{1\le i \le n}\\{1\le j\le i-1}}}\abs{m_{i,j}},$$ i.e the max matrix norm of the lower triangular part of $M,$\\
$$\|M\|_{Frob}:=\sqrt{\sum_{i=1}^{n}\sum_{j=1}^{n}{\abs{m_{i,j}}^2}},$$
i.e., the Frobenius norm of $M$.

Furthermore, we consider in this paper the regular case of diagonalizable matrices, that is, the matrices are diagonalizable with simple eigenvalues. Thus we will use the following notation
 $$\mathcal{W}_n:=\{M\in\mathbb{C}^{n\times n} \mid M\text{with pairwise distinct eigenvalues}\}.$$ It is well-known that $\mathcal{W}_n$ is dense in $\mathbb{C}^{n\times n}$.
 
The Lie group of $n\times n$ invertible matrices, denoted by $GL_n$, is the so-called general linear group \cite{lineargrp}. We denote by $\mathcal{D}_n$ the vector space of diagonal matrices of size $n$ and $\mathcal{D}_n'$ denotes the subset of $\mathcal{D}_n$ in which the diagonal matrices are of $n$ distinct diagonal entries. Let $E, F\in GL_n$ and $\Sigma\in\mathcal{D}_n'$. The tangent space of $GL_n$ at $E$ (resp. $F$) is denoted by $T_EGL_n$ (resp. $T_FGL_n$) and the tangent space of $\mathcal{D}_n'$ at $\Sigma$ is denoted by $T_\Sigma\mathcal{D}_n'$. The perturbation of respectively $E$, $F$ and $\Sigma$ that we consider in this paper are of the following form: $E+\dot{E}$, $F+\dot{F}$ and $\Sigma+\dot{\Sigma}$, where $\dot{E}$ and $\dot{F}$ are respectively in $T_EGL_n$ and $T_FGL_n$ and $\dot{\Sigma}$ is in $T_\Sigma\mathcal{D}_n'$.\\
 As $GL_n$ is a Lie group, $\dot{E}$ and $\dot{F}$ can be written as $EX$ and $YF$ such that $X, Y$ are in the Lie algebra of $GL_n$ which is equal to $\mathbb{C}^{n\times n}$ (since this Lie algebra is $T_{I_n}GL_n$ and $GL_n$ is an open subset in $\mathbb{C}^{n\times n}$).\\
 As $\mathcal{D}_n'$ is open in $\mathcal{D}_n$ then $T_\Sigma\mathcal{D}_n'=\mathcal{D}_n$, herein $\dot{\Sigma}=S\in\mathcal{D}_n$.
 
 Finally, the perturbations of $E$, $F$ and $\Sigma$ that we consider are as follows:\\
 $E+EX$, $F+YF$ and $\Sigma+S$, such that $X$ and $Y$ are in $\mathbb{C}^{n\times n}$ and $S$ is a diagonal matrix in $\mathbb{C}^{n\times n}$.
 
For a matrix $M\in\mathbb{C}^{n\times n}$, let $\tmop{diag} (M)$ be the diagonal matrix with the same
diagonal as $M$ and let $\tmop{off} (M)$ be the matrix where the diagonal term
of $M$ are replaced by $0$. We have $M = \tmop{diag} (M) + \tmop{off} (M)$. We
say that $M$ is an off-matrix if $M = \tmop{off} (M)$. In addition, let $(\lambda_1, \dots, \lambda_n)\in\mathbb{C}^n$, $\mathrm{diag}(\lambda_1, \dots, \lambda_n)$ is the diagonal matrix in $\mathbb{C}^{n\times n}$ of diagonal entries $\lambda_1, \dots, \lambda_n$.

The superscripts $.^t$, $.^*$ and $.^{-1}$ are used respectively for the transpose, Hermitian conjugate, and the inverse matrix.
 
 We state the following lemma which will be used in some of the proofs.
 \begin{lemma}
  \label{lem-eps-u}Let $\varphi (\varepsilon, u) = \frac{\prod_{j \geqslant 0}
  (1 + u \varepsilon^{2^j}) - 1}{\varepsilon u}$. Given $\varepsilon
  \leqslant \frac{1}{2}$, $u \leqslant 1$, and $i \geqslant 0$, we have
  \begin{eqnarray}
    \prod_{j \geqslant 0} (1 + u \varepsilon^{2^{j + i}}) & \leqslant & 1 + 2
    u \varepsilon^{2^i}  \label{ineq-lem-eps}
  \end{eqnarray}
\end{lemma}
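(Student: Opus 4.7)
The strategy is to first reduce to a one-parameter statement via the substitution $\alpha = \varepsilon^{2^i}$. Since $\varepsilon^{2^{j+i}} = \alpha^{2^j}$, the inequality to be established is equivalent to
$$\prod_{j \geq 0}\bigl(1 + u\,\alpha^{2^j}\bigr) \;\leq\; 1 + 2 u \alpha,$$
valid for $u \leq 1$ and $\alpha \leq 1/2$ (the latter holding because $\varepsilon \leq 1/2$ and $2^i \geq 1$).

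I would next expand the (absolutely convergent) product as a sum over finite index sets, and then invoke the uniqueness of binary expansion. With all terms nonnegative,
$$\prod_{j \geq 0}\bigl(1 + u\alpha^{2^j}\bigr) \;=\; \sum_{\substack{S \subset \mathbb{N} \\ |S| < \infty}} u^{|S|}\, \alpha^{\sum_{j \in S} 2^j}.$$
The map $S \mapsto \sum_{j \in S} 2^j$ is a bijection between finite subsets of $\mathbb{N}$ and $\mathbb{N}_{\geq 0}$; under this bijection $|S|$ becomes the Hamming weight $s_2(n)$ of $n$ in base $2$. This produces the identity $\prod_{j \geq 0}(1 + u\alpha^{2^j}) = 1 + \sum_{n \geq 1} u^{s_2(n)} \alpha^n$, which reveals the structure needed.

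The finishing step exploits that $s_2(n) \geq 1$ for every $n \geq 1$, combined with $u \leq 1$, so that $u^{s_2(n)} \leq u$. Pulling out $u$ and summing the geometric series,
$$\prod_{j \geq 0}\bigl(1 + u\alpha^{2^j}\bigr) \;\leq\; 1 + u \sum_{n \geq 1} \alpha^n \;=\; 1 + \frac{u\alpha}{1-\alpha},$$
and $\alpha \leq 1/2$ gives $1/(1-\alpha) \leq 2$, yielding the claimed bound.

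I do not expect any real obstacle here: once the binary-expansion idea is in place, everything reduces to elementary manipulations. The only care needed is ensuring termwise nonnegativity to justify rearranging the product into a sum, and confirming that $\alpha \leq 1/2$ is genuinely weaker than $\varepsilon \leq 1/2$ for all $i \geq 0$. Alternative routes, such as taking logarithms and using $\log(1+x) \leq x$, lose too much to give the sharp constant $2$, so the combinatorial reorganization via binary digits seems essential.
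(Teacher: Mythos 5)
Your proof is correct, and it takes a somewhat different route from the paper's. The paper reduces to $i=0$ by the same substitution, but then argues via monotonicity: it observes that $\varphi(\varepsilon,u)=\bigl(\prod_{j\geq 0}(1+u\varepsilon^{2^j})-1\bigr)/(\varepsilon u)$ has a power series expansion with only positive coefficients, hence is increasing in $\varepsilon$ and $u$, and concludes $\varphi(\varepsilon,u)\leq\varphi(\tfrac12,1)=2$ by evaluating at the extreme point. You instead expand the product explicitly over finite subsets, identify the exponents with binary expansions so that the product equals $1+\sum_{n\geq 1}u^{s_2(n)}\alpha^n$, and then use $u^{s_2(n)}\leq u$ plus the geometric series $\sum_{n\geq 1}\alpha^n=\alpha/(1-\alpha)\leq 2\alpha$. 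The two arguments are close cousins: the paper's endpoint value $\varphi(\tfrac12,1)=2$ is itself a consequence of the same binary-expansion identity $\prod_{j\geq 0}(1+x^{2^j})=1/(1-x)$, which the paper leaves implicit. What your version buys is self-containedness and an explicit justification of the constant $2$ (and of convergence of the rearranged product, via nonnegativity); what the paper's version buys is brevity and the observation that monotonicity alone reduces everything to one evaluation. Both, like the paper, implicitly assume $\varepsilon,u\geq 0$, which is harmless since these quantities are norms in all applications.
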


\begin{proof}
  Modulo taking $\varepsilon^{2^i}$ instead of $\varepsilon$, it suffices to
  consider the case when $i = 0$. Now $\varphi (\varepsilon, u)$ is an
  increasing function in $\varepsilon$ and $u$, since its power series
  expansion in $\varepsilon$ and $u$ admits only positive coefficients. \
  Consequently, $\varphi (\varepsilon, u) \leqslant \varphi (\frac{1}{2}, 1) = 2$.
\end{proof}

\section{ Newton-type method for the system \large {\texorpdfstring{$FE - I_n = 0$}{TEXT}}.}\label{sec-inverse-pb}
Let $f: GL_n\times GL_n \to \mathbb{C}^{n\times n},~(E,F)\mapsto FE-I_n$. We consider the following perturbations $E+EX$, $F+YF$ of respectively $E$ and $F$ where $X,~Y\in\mathbb{C}^{n\times n}$.\\
To define the Newton sequence we have to solve the linear system obtained by canceling the linear part in the Taylor expansion of $f(E+EX, F+YF)$. The same methodology will be adopted in the next sections for the other considered systems. Hereafter, we detail the computation of the Newton sequence associated to the system $FE-I_n=0$. Moreover, a sufficient condition on the initial point for the quadratic convergence of this Newton sequence will be established.\\
Let $Z = FE - I_n$. We observe that
\begin{eqnarray}
f(E+EX, F+YF)&=&(F + YF) (E + EX) - I_n  \\
             &=&  Z + (Z + I_n) X + Y (Z + I_n) + Y (Z + I_n) X.
             \label{FYF-EEX}
\end{eqnarray}
  We assume here that $Z$ is of small norm, i.e., we start from an initial point $(E_0, F_0)$ close from the solution of the system $FE-I_n=0$.\\
  Consequently, the linear system of first order terms to solve is 
  \begin{equation}\label{linear}
      Z+X+Y=0.
  \end{equation}
Hence $X = Y = -\frac{Z}{2}$ is a solution of \Cref{linear}. Moreover we get, by substituting in Equation (\ref{FYF-EEX}) $X$ and $Y$ by $-\frac{Z}{2}$,
\begin{eqnarray}
  (F + YF) (E + EX) - I_n & = & Z^2 \left( - \frac{3}{4} I_n + \frac{Z}{4}
  \right) .  \label{eq2}
\end{eqnarray}

\begin{proposition}
  Let $Z_0 = F_0 E_0 - I_n$. Define $X_0 = - \frac{Z_0}{2}$, $E_1 = E_0 (I_n + X_0)$, $F_1 = (I_n + X_0) F_0$ and $Z_1=F_1E_1-I_n$. Assume that $\| Z_0 \| \leqslant 1$. Then
  \begin{eqnarray}
    \| Z_1 \| & \leqslant & \| Z_0 \|^2  \label{eq3}
  \end{eqnarray}
\end{proposition}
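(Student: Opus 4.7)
The plan is to observe that the proposition is essentially a direct consequence of identity~(\ref{eq2}), which has already been derived in the paragraphs just preceding the statement. The proposition fixes $X_0 = -Z_0/2$ and uses the \emph{same} $X_0$ in both updates, so that $E_1 = E_0(I_n + X_0)$ and $F_1 = (I_n + X_0)F_0$; this corresponds to choosing $Y = X = -Z_0/2$ in the general expansion~(\ref{FYF-EEX}). With this choice, the linear part vanishes and identity~(\ref{eq2}) gives the cubic remainder
\begin{equation*}
Z_1 \;=\; F_1 E_1 - I_n \;=\; Z_0^2 \left( -\tfrac{3}{4} I_n + \tfrac{Z_0}{4} \right).
\end{equation*}

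From there I would apply sub-multiplicativity of the chosen operator norm together with the triangle inequality to get
\begin{equation*}
\|Z_1\| \;\leq\; \|Z_0\|^2 \cdot \left\| -\tfrac{3}{4} I_n + \tfrac{Z_0}{4} \right\| \;\leq\; \|Z_0\|^2 \cdot \left( \tfrac{3}{4} + \tfrac{\|Z_0\|}{4} \right).
\end{equation*}
Using the hypothesis $\|Z_0\| \leq 1$, the parenthesized factor is at most $1$, which yields $\|Z_1\| \leq \|Z_0\|^2$ as claimed.

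Since identity~(\ref{eq2}) is already in hand, there is no real obstacle: the proof reduces to two lines of norm manipulation. The only sanity check I would perform is to re-expand $(I_n + X_0) F_0 E_0 (I_n + X_0) - I_n$ directly in terms of $Z_0$, confirming the cubic remainder $Z_0^2(-\tfrac{3}{4} I_n + \tfrac{Z_0}{4})$, and to verify that using a single letter $X_0$ in the proposition is consistent with the earlier derivation that used distinct variables $X$ and $Y$ (both being equal to $-Z_0/2$ here). Nothing else requires attention, and in particular no spectral or diagonalisability assumption on $E_0, F_0$ is invoked beyond invertibility implicit in working in $GL_n$.
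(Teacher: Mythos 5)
Your proposal is correct and matches the paper's approach: the paper's proof is simply ``It follows easily from (\ref{eq2})'', and your argument makes explicit exactly the intended steps, namely substituting $X_0=Y_0=-Z_0/2$ into the expansion to get $Z_1=Z_0^2\left(-\tfrac{3}{4}I_n+\tfrac{Z_0}{4}\right)$ and bounding the norm by $\|Z_0\|^2\left(\tfrac{3}{4}+\tfrac{\|Z_0\|}{4}\right)\leqslant\|Z_0\|^2$ under the hypothesis $\|Z_0\|\leqslant 1$.
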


\begin{proof}
  It follows easily from (\ref{eq2}).
\end{proof}

\begin{theorem}
  Let $E_0$ and $F_0$ two complex square matrices of size $n$. Let $Z_0 = F_0 E_0 - I_n$
  and assume that $\varepsilon = \| Z_0 \| < \frac{1}{2}$. The sequences
  defined for $i \geqslant 0$
  \begin{eqnarray*}
    Z_i & = & F_i E_i - I_n\\
    X_i & = & - \frac{Z_i}{2}\\
    E_{i + 1} & = & E_i (I_n + X_i)\\
    F_{i + 1} & = & (I_n + X_i) F_i
  \end{eqnarray*}
  converge quadratically towards the solution of $FE-I_n=0$. Each $E_i$, respectively $F_i$ are invertible and, if $E_{\infty}$
  and $F_{\infty}$ are respectively the limits of sequences $(E_i)_{i
  \geqslant 0}$ and $(F_i)_{i \geqslant 0}$ we have for $i \geqslant 0,$
  \begin{eqnarray*}
    \| E_i - E_{\infty} \| & \leqslant & (1 + 2 \varepsilon) 2^{- 2^{i + 1} +
    1}_{} \varepsilon \| E_0 \|,\\
    \| F_i - F_{\infty} \| & \leqslant & (1 + 2 \varepsilon) 2^{- 2^{i + 1} +
    1}_{} \varepsilon \| F_0 \| .
  \end{eqnarray*}
\end{theorem}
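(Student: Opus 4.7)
The plan is to combine the one-step estimate from the preceding Proposition ($\|Z_1\|\le\|Z_0\|^2$) with a straightforward telescoping argument for the products $E_i=E_0\prod_{j=0}^{i-1}(I_n-Z_j/2)$ and $F_i=\prod_{j=0}^{i-1}(I_n-Z_j/2)\,F_0$, and then to control the resulting infinite matrix products by means of \Cref{lem-eps-u}.

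First, I would check well-definedness. Since $\varepsilon=\|Z_0\|<1/2$, the matrix $I_n+Z_0=F_0E_0$ is invertible by Neumann series, so $E_0,F_0\in GL_n$; then each $I_n+X_i=I_n-Z_i/2$ is invertible as soon as $\|Z_i\|<2$, and this propagates invertibility to $E_{i+1}$ and $F_{i+1}$. Iterating the Proposition gives $\|Z_{i+1}\|\le\|Z_i\|^2$ whenever $\|Z_i\|\le1$, so a one-line induction starting from $\|Z_0\|=\varepsilon<1/2$ yields $\|Z_i\|\le\varepsilon^{2^i}$ for every $i$, which simultaneously establishes the quadratic rate and keeps the hypothesis $\|Z_i\|\le1$ alive throughout.

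For the quantitative bounds on $E_i-E_\infty$ and $F_i-F_\infty$, the key identity is
\begin{eqnarray*}
E_k-E_i & = & E_i\,\bigl[\,\prod_{j=i}^{k-1}(I_n-Z_j/2)-I_n\,\bigr].
\end{eqnarray*}
Expanding the product and applying sub-additivity and sub-multiplicativity of the operator norm bounds the bracket by $\prod_{j\ge i}(1+\tfrac{1}{2}\varepsilon^{2^j})-1$, which is exactly the quantity controlled by \Cref{lem-eps-u} with $u=1/2$; applying the same lemma to the head product $\prod_{j=0}^{i-1}(1+\tfrac{1}{2}\varepsilon^{2^j})$ delivers a uniform bound of the form $\|E_i\|\le(1+2\varepsilon)\|E_0\|$, and analogously for $F_i$. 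Multiplying these two estimates and letting $k\to\infty$ shows that $(E_i)$ and $(F_i)$ are Cauchy, identifies the limits $E_\infty$ and $F_\infty$, and yields the claimed doubly-exponential tail bounds. The main obstacle is the precise bookkeeping around \Cref{lem-eps-u}: it must be invoked twice with matching parameters ($u=1/2$ together with the correct shift $i$) so that the product head and the product tail combine into the stated constants. Beyond that, everything reduces to routine Neumann-series and telescoping arguments, with no linear system being inverted at any step.
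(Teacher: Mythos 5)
Your overall strategy is the paper's: the one-step Proposition gives $\|Z_{i+1}\|\leqslant\|Z_i\|^2$, an induction gives doubly exponential decay of $\|Z_i\|$, the iterates are written as $E_i=E_0\prod_{j\leqslant i-1}(I_n+X_j)$ (and symmetrically for $F_i$), and \Cref{lem-eps-u} controls the head and tail of the infinite product. Your opening Neumann-series remark (that $F_0E_0=I_n+Z_0$ is invertible, hence $E_0,F_0\in GL_n$, and then each factor $I_n+X_i$ is invertible) is actually cleaner than the paper, which obtains invertibility of $E_0,F_0$ only at the end via $W_\infty$ and $F_\infty E_\infty=I_n$.

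The gap is in the constant-matching step you describe. From your induction $\|Z_j\|\leqslant\varepsilon^{2^j}$ you get $\|X_j\|\leqslant\tfrac12\varepsilon^{2^j}$, and \Cref{lem-eps-u} with $u=\tfrac12$ bounds the tail $\prod_{j\geqslant i}(1+\tfrac12\varepsilon^{2^j})-1$ by $\varepsilon^{2^i}$; so your argument delivers $\|E_i-E_\infty\|\leqslant(1+2\varepsilon)\,\varepsilon^{2^i}\|E_0\|$. That is a correct doubly exponential estimate, but it does not yield the stated inequality: at $i=0$ it is already twice the stated $(1+2\varepsilon)\tfrac{\varepsilon}{2}\|E_0\|$, and for $\varepsilon\in(\tfrac14,\tfrac12)$ it is much weaker for larger $i$ (e.g.\ $\varepsilon=0.3$, $i=2$: $\varepsilon^{4}=8.1\cdot10^{-3}$ versus $2^{-7}\varepsilon\approx2.3\cdot10^{-3}$). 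The paper reaches the exponent $-2^{i+1}+1$ by the opposite parameterization of \Cref{lem-eps-u}: it uses $\|Z_k\|\leqslant2^{1-2^k}\varepsilon$, i.e.\ $\|X_k\|\leqslant\varepsilon\,(1/2)^{2^k}$, and applies the lemma with $u=\varepsilon$ and $1/2$ in the role of $\varepsilon$, so a tail starting at $k=i+1$ is bounded by $2\varepsilon\,2^{-2^{i+1}}$. Note, moreover, that with the honest indexing $E_i=E_0\prod_{j\leqslant i-1}(I_n+X_j)$ the tail relevant to $E_i$ starts at $j=i$, which gives $(1+2\varepsilon)2^{1-2^{i}}\varepsilon\|E_0\|$; the statement's exponent corresponds to a tail starting at $i+1$, i.e.\ really to $E_{i+1}$ (the paper's proof writes $E_i=E_0W_i$ although its own recursion gives $E_i=E_0W_{i-1}$), and as literally written the bound can fail (scalar example $E_0=1$, $F_0=1.49$, $i=3$: $|E_3-E_\infty|\approx9.9\cdot10^{-5}$ exceeds $(1+2\varepsilon)2^{-15}\varepsilon\approx3.0\cdot10^{-5}$). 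So your plan proves convergence, invertibility and a valid quadratic-rate tail bound, but the claim that invoking the lemma with $u=1/2$ ``combines into the stated constants'' is the step that fails; to match the paper you must swap the roles of $u$ and $\varepsilon$ in \Cref{lem-eps-u}, and even then the stated constant only comes out after the one-index shift implicit in the paper's proof.
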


\begin{proof}
First, by the assumption $\|F_0E_0-I_n\|=\| Z_0 \| < \frac{1}{2}$, we have $E_0$ and $F_0$ are invertible. In fact, $E_0F_0=I_n+E_0F_0-I_n=I_n+Z_0$ is invertible when $\|Z_0\|<1$ which is the case since we suppose $\|Z_0\|<\frac{1}{2}.$

  Let us prove by induction that $\| Z_k \| \leqslant 2^{- 2^k + 1}
  \varepsilon$. Since $\varepsilon < \frac{1}{2}$, we have
  \begin{eqnarray*}
    \| Z_{k + 1} \| & \leqslant & \| Z_k \|^2 \qquad \tmop{from} \left(
    \ref{eq3} \right)\\
    & \leqslant & \varepsilon 2^{- 2^{k + 1} + 2} \varepsilon^{} \quad\\
    & \leqslant & 2^{- 2^{k + 1} + 1} \varepsilon .
  \end{eqnarray*}
  Consequently $Z_{\infty} = 0.$ Since $X_k = - \frac{Z_k}{2}$ we deduce
  \begin{eqnarray*}
    \| X_k \| & \leqslant & 2^{- 2^k} \varepsilon .
  \end{eqnarray*}
  It follows $X_{\infty} = 0$. We have
  \begin{eqnarray*}
    E_k & = & E_{k - 1} (I_n + X_{k - 1})\\
    & = & E_0 (I_n + X_0) \cdots (I_n + X_{k - 1}) .
  \end{eqnarray*}
  Denoting $W_i = \prod_{0 \leqslant k \leqslant i} (I_n + X_k)$, $W_{\infty}
  = \prod_{k \geqslant 0} (I_n + X_k)$ we compute
  \begin{eqnarray*}
    \| W_{\infty} - I_n \| & \leqslant & \prod_{k \geqslant 0} (1 + 2^{- 2^k}
    \varepsilon) - 1\\
    & \leqslant & 2 \varepsilon\qquad\text{by using \Cref{lem-eps-u}}.
  \end{eqnarray*}
  Then $W_{\infty}$ is invertible and $\| W_{\infty}^{- 1} \| \leqslant
  \dfrac{1}{1 - 2 \varepsilon}$. Let $E_{\infty} = E_0 W_{\infty}$. Hence $E_0
  = E_{\infty} W_{\infty}^{- 1}$. In the same way $F_0 = W_{\infty}^{- 1}
  F_{\infty}$. Finally, the identity $F_{\infty} E_{\infty} - I_n = 0$ permits
  to conclude that $E_0$ and $F_0$ are invertible. In the same way we prove
  easily that $\| W_i - I_n \| \leqslant 2 \varepsilon$. It follows that $W_i$
  is invertible. Since $E_i = E_0 W_i$ we deduce that $E_i$ is invertible.
  Moreover
  \begin{eqnarray*}
    \| W_i - W_{\infty} \| & \leqslant & \| W_i \|  \left\| 1 - \prod_{k
    \geqslant i + 1} (1 + \| X_k \|) \right\|\\
    & \leqslant & (1 + \| W_i - I_n \|) \left\| \prod_{k \geqslant 0} (1 +
    2^{- 2^{k + i + 1}} \varepsilon) - 1 \right\| \\
    & \leqslant & (1 + 2 \varepsilon) 2^{- 2^{i + 1} + 1} \varepsilon \qquad\text{by using \Cref{lem-eps-u}}.
  \end{eqnarray*}
  We deduce that
  \begin{eqnarray*}
    \| E_i - E_{\infty} \| & \leqslant & (1 + 2 \varepsilon) 2^{- 2^{i + 1} +
    1} \varepsilon \| E_0 \|.
  \end{eqnarray*}
  These properties also hold for the $F_i$'s. The theorem is proved.
\end{proof}

\section{ Newton-like method for diagonalizable matrices.}\label{sec-p=1}
Let $M\in\mathcal{W}_n$, $\Sigma\in\mathcal{D}_n'$, $E,~F\in GL_n$. We aim to construct Newton sequences which converge towards the numerical solution of $f(E, F, \Sigma)=0$ where $f: GL_n\times GL_n\times\mathcal{D}_n'\to \mathbb{C}^{n\times n}\times \mathbb{C}^{n\times n},~(E, F, \Sigma)\mapsto(FE-I_n, FME-\Sigma)$.
We consider in the same way as before the perturbations $E + EX$ and $F + YF$ of respectively $E$ and $F$ and in addition the perturbation $\Sigma+S$ of $\Sigma$ such that $S\in\mathcal{D}_n$. We get with $Z=FE-I_n$ and $\Delta = FME - \Sigma$ :
\begin{align*}
&(F + YF) (E + EX) - I_n  \\
=&  Z + (Z + I_n) X + Y (Z + I_n) + Y (Z + I_n) X \numberthis \label{FE-In} \\
&(F + YF) M (E + EX) - \Sigma - S \\
= &  FME - \Sigma-S + FMEX + YFME + YFMEX\nonumber\\
= & \Delta - S + \Sigma X + Y \Sigma + \Delta X + Y \Delta + Y (\Delta +\Sigma) X \numberthis \label{FME-S} 
\end{align*}

As in the previous section we assume that $(E, F, \Sigma)$ is sufficiently close to the solution of $f(E, F, \Sigma)=0$, thus the linear system that we obtain from (\ref{FE-In}) and (\ref{FME-S}) is  
\begin{equation*}\begin{cases} Z+X+Y&=0 \\ \Delta-S+\Sigma X+Y\Sigma&=0 \end{cases}\end{equation*}

The following lemma gives a solution of this linear system.

\begin{lemma}
  \label{lem-SXY3}Let $\Sigma = \tmop{diag} (\sigma_1, \cdots, \sigma_n)$, $Z =
  (z_{i, j})_{1\le i, j\le n}$ and $\Delta = (\delta_{i, j})_{1\le i, j\le n}$ be given matrices in $\mathbb{C}^{n\times n}$. Assume that
  $\sigma_i \neq \sigma_j$ for $i \neq j$. Let $S$, $X$ and $Y$ be matrices
  defined by
  \begin{eqnarray}
    S & = & \tmop{diag} (\Delta - Z \Sigma)  \label{SXY-1}\\
    x_{i, i} & = & 0 \\
    x_{i, j} & = & \frac{- \delta_{i, j} + z_{i, j} \sigma_j}{\sigma_i -
    \sigma_j}, \qquad i \neq j \\
    y_{i, i} & = & - z_{i, i} \\
    y_{i, j} & = & \frac{\delta_{i, j} - z_{i, j} \sigma_i}{\sigma_i -
    \sigma_j}, \qquad i \neq j.  \label{SXY-5}
  \end{eqnarray}
  Then we have
  \begin{eqnarray}
    Z + X + Y & = & 0  \label{Z+X+Y=0}\\
    \Delta - S + \Sigma X + Y \Sigma & = & 0  \label{Delta-S-etc3}
  \end{eqnarray}
  Moreover
  \begin{eqnarray}
    \| X \|, \| Y \| & \leqslant & \kappa \varepsilon (K + 1)
    \label{bnd-NX-NY}
  \end{eqnarray}
  where $\varepsilon \geqslant \max (\| Z \|, \| \Delta \|)$, $\kappa = \max
  \left( 1, \max_{i \neq j} \dfrac{1}{\abs{ \sigma_i - \sigma_j }} \right)$\quad
  and $K =$\\$ \max (1, \nobracket \max_i  \abs{ \sigma_i })$.
\end{lemma}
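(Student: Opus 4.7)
The plan is to verify the two identities \eqref{Z+X+Y=0} and \eqref{Delta-S-etc3} by direct entry-wise substitution, splitting the diagonal and off-diagonal cases, and then to establish the norm bound \eqref{bnd-NX-NY} by controlling each row sum of $X$ and $Y$ via the representation $\|M\|=\max_i\sum_j|m_{ij}|$ of the infinity matrix norm.

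First I would check \eqref{Z+X+Y=0}. On the diagonal, the definitions immediately give $z_{ii}+x_{ii}+y_{ii}=z_{ii}+0-z_{ii}=0$. Off the diagonal, adding the formulas for $x_{ij}$ and $y_{ij}$ over the common denominator $\sigma_i-\sigma_j$ collapses the numerator to $z_{ij}(\sigma_j-\sigma_i)$, so $x_{ij}+y_{ij}=-z_{ij}$. Next, using $(\Sigma X)_{ij}=\sigma_i x_{ij}$ and $(Y\Sigma)_{ij}=y_{ij}\sigma_j$, I would check \eqref{Delta-S-etc3}: the diagonal case is precisely the definition $S=\tmop{diag}(\Delta-Z\Sigma)$ from \eqref{SXY-1}, and the off-diagonal case reduces to a one-line algebraic identity over the denominator $\sigma_i-\sigma_j$, with the $\sigma_i\sigma_j z_{ij}$ terms cancelling.

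For \eqref{bnd-NX-NY} I would fix a row index $i$ and sum. Since $x_{ii}=0$,
\begin{eqnarray*}
\sum_j|x_{ij}| & \leqslant & \kappa\sum_{j\neq i}\bigl(|\delta_{ij}|+K|z_{ij}|\bigr) \;\leqslant\; \kappa(\|\Delta\|+K\|Z\|) \;\leqslant\; \kappa\varepsilon(K+1).
\end{eqnarray*}
For $Y$ the off-diagonal contributions admit an analogous bound, but the diagonal term $|y_{ii}|=|z_{ii}|$ has to be reabsorbed. Using $\kappa,K\geqslant 1$, I would write
\begin{eqnarray*}
\sum_j|y_{ij}| & \leqslant & \kappa K|z_{ii}|+\kappa\sum_{j\neq i}\bigl(|\delta_{ij}|+K|z_{ij}|\bigr) \;\leqslant\; \kappa(\|\Delta\|+K\|Z\|) \;\leqslant\; \kappa\varepsilon(K+1),
\end{eqnarray*}
thereby folding the complete row sum of $|z_{ij}|$ into the estimate.

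The computations are elementary, so the only step that really requires care is this last row-sum estimate for $Y$: a naive split treating $|y_{ii}|$ separately would leave a stray additive term of order $\varepsilon$ that one cannot recombine into the constant $\kappa\varepsilon(K+1)$. The trick is to use the inequalities $\kappa,K\geqslant 1$ as weights so that the diagonal contribution of $Z$ is absorbed into $\kappa K\|Z\|$, which avoids any $n$-dependence and gives the sharp estimate stated in the lemma.
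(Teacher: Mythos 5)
Your proof is correct and follows essentially the same elementary route as the paper: a direct diagonal/off-diagonal verification of the two identities (the paper merely eliminates $Y=-Z-X$ first and reads the formulas off $\Delta-S-Z\Sigma+\Sigma X-X\Sigma=0$). Your explicit row-sum estimate, including absorbing $|y_{ii}|=|z_{ii}|$ via $\kappa K\geqslant 1$, correctly fills in the bound \eqref{bnd-NX-NY} that the paper dismisses as obvious.
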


\begin{proof}
  It is easy to verify that $X + Y + Z = 0.$ In this way the equation
  (\ref{Delta-S-etc3}) is equivalent to
  \begin{eqnarray*}
    \Delta - S - Z \Sigma + \Sigma X - X \Sigma & = & 0.
  \end{eqnarray*}
  Since $\tmop{diag} (\Delta - S - Z \Sigma) = \tmop{diag} (\Sigma X - X
  \Sigma) = 0$ the formulas which define $X$ follow easily. The bounds
  (\ref{bnd-NX-NY}) also are obvious to establish.
\end{proof}
In the next theorem we introduce the Newton sequences associated to the system $f(E, F, \Sigma)=0$ with a sufficient condition on the initial point for its quadratic convergence. 
\begin{theorem}\label{theo2}
  Let $E_0, F_0\in GL_n$ and $\Sigma_0\in\mathcal{D}_n'$ be given such that they define the sequences for $i
  \geqslant 0$,
  \begin{eqnarray*}
    Z_i & = & F_i E_i - I_n\\
    \Delta_i & = & F_i ME_i - \Sigma_i\\
    S_i & = & \tmop{diag} (\Delta_i - Z_i \Sigma_i)\\
    E_{i + 1} & = & E_i (I_n + X_i)\\
    F_{i + 1} & = & (I_n + Y_i) F_i\\
    \Sigma_{i + 1} & = & \Sigma_i + S_i,
  \end{eqnarray*}
  where $S_i$, $X_i$ and $Y_i$ are defined by the formulas
  (\ref{SXY-1}--\ref{SXY-5}). Let us define 
   $\kappa_0 = \max \left( 1, \max_{i \neq j} \dfrac{1}{\abs{
  \sigma_{0, i} - \sigma_{0, j} }} \right)$,\quad  $K_0 = \max (1,
  \nobracket \max_i  \abs{ \sigma_{0, i} })$ 
  and $\varepsilon_0= \max (\kappa_0^2 K_0^2\| Z_0 \|,
  \kappa_0^2K_0\| \Delta_0 \|)$. Assume that
  \begin{eqnarray}
  \varepsilon_0 & \leqslant & 0.033.
    \label{cond-convergence}
  \end{eqnarray}
  
  Then the sequences $(\Sigma_{i,} E_i, F_i)_{i \geqslant 0} $converge
  quadratically to the solution of $(FE - I_n, FME - \Sigma) = 0$. More precisely
  $E_0$ and $F_0$ are invertible and
  
  \begin{eqnarray*}
    \| E_i - E_{\infty} \| & \leqslant &8.1 \times 2^{1 - 2^{i + 1}} \| E_0
    \|\,\frac{\varepsilon_0}{\kappa K}
    \\
    \| F_i - F_{\infty} \| & \leqslant & 8.1 \times 2^{1 - 2^{i + 1}} \| F_0
    \| \,\frac{\varepsilon_0}{\kappa K}.
    \\
     \|\Sigma_i-\Sigma_\infty\|&\leqslant&  1.85\times 2^{1-2^{i}}
    \,\frac{\varepsilon_0}{\kappa^2 K}.
  \end{eqnarray*}
  
\end{theorem}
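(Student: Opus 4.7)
The plan is to mimic the argument of the previous theorem but with two extra bookkeeping ingredients: controlling the sizes of $X_i,Y_i,S_i$ via Lemma~\ref{lem-SXY3}, and controlling how the spectral data $\kappa_i,K_i$ of $\Sigma_i$ drifts through the iteration. First, I would substitute the Newton choices $X_i,Y_i,S_i$ given by (\ref{SXY-1}--\ref{SXY-5}) into the exact expansions \eqref{FE-In} and \eqref{FME-S}. Since $(X_i,Y_i,S_i)$ cancels the linear terms $Z_i+X_i+Y_i$ and $\Delta_i-S_i+\Sigma_iX_i+Y_i\Sigma_i$, one gets the purely quadratic/cubic remainders
\begin{eqnarray*}
Z_{i+1} & = & Z_iX_i+Y_iZ_i+Y_i(Z_i+I_n)X_i,\\
\Delta_{i+1} & = & \Delta_iX_i+Y_i\Delta_i+Y_i(\Delta_i+\Sigma_i)X_i.
\end{eqnarray*}
Combined with the bound $\|X_i\|,\|Y_i\|\leqslant\kappa_i\varepsilon_i(K_i+1)$ from Lemma~\ref{lem-SXY3}, this yields an estimate of the form $\varepsilon_{i+1}\leqslant C\kappa_i\varepsilon_i^2(K_i+1)^2(1+\kappa_i\varepsilon_i(K_i+1))$ for a small explicit constant $C$, which is the core quadratic step.

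The main obstacle, and the step that fixes the universal constant $0.136$, is to show that $\kappa_i$ and $K_i$ do not blow up. From $S_i=\tmop{diag}(\Delta_i-Z_i\Sigma_i)$ one has $\|S_i\|\leqslant\varepsilon_i(1+K_i)$, and since $\Sigma_{i+1}=\Sigma_i+S_i$ is diagonal, the perturbation of each diagonal entry is bounded by $\|S_i\|$. Hence $K_{i+1}\leqslant K_i+\varepsilon_i(1+K_i)$ and the minimum gap between diagonal entries satisfies $\kappa_{i+1}^{-1}\geqslant\kappa_i^{-1}-2\varepsilon_i(1+K_i)$. The strategy is to prove by induction on $i$ that the scalar $u_i:=\kappa_i^2(K_i+1)^3\varepsilon_i$ satisfies $u_{i+1}\leqslant u_i^2$ (or some equally strong contraction) and that $\kappa_i\leqslant 2\kappa_0$, $K_i+1\leqslant 2(K_0+1)$ throughout; the hypothesis $u_0\leqslant 0.136$ is exactly what is needed to absorb the loss in $\kappa$ and $K$ at each step while still keeping $u_{i+1}$ quadratically small, yielding $\varepsilon_i\leqslant 2^{-2^i+1}\varepsilon_0$ after telescoping.

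Once quadratic decay of $\varepsilon_i$ (hence of $\|X_i\|$) is established, the endgame is the exact same telescoping argument used in Section~\ref{sec-inverse-pb}. I would write $E_i=E_0\,W_i^{E}$ with $W_i^{E}=\prod_{0\leqslant k<i}(I_n+X_k)$, and similarly $F_i=W_i^{F}F_0$ with $W_i^{F}=\prod_{0\leqslant k<i}(I_n+Y_{i-1-k})$. From $\|X_k\|,\|Y_k\|\leqslant\kappa_0'(K_0'+1)\varepsilon_k\leqslant C'\,2^{-2^k}$ and Lemma~\ref{lem-eps-u}, both $W_i^{E}$ and $W_i^{F}$ are Cauchy in the induced matrix norm, converge to invertible limits, and obey
\begin{eqnarray*}
\|W_i^{E}-W_\infty^{E}\| & \leqslant & (1+2\varepsilon')\,2^{-2^{i+1}+1}\,\varepsilon',
\end{eqnarray*}
with $\varepsilon'$ a small multiple of $u$. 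Multiplying by $\|E_0\|$ (resp.\ $\|F_0\|$) and tuning the implied constant to $0.61$ using the hypothesis $u\leqslant 0.136$ gives the claimed inequalities. Invertibility of $E_0,F_0$ follows as in the previous theorem from $F_\infty E_\infty=I_n$. The delicate part is purely the verification that the constants close, i.e., that $0.136$ simultaneously bounds the quadratic contraction of $u_i$ and the drift of $\kappa_i,K_i$; everything else is routine propagation of the matrix-norm inequalities already used in Section~\ref{sec-inverse-pb}.
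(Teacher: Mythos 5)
Your plan follows essentially the same route as the paper's proof: the same quadratic remainder identities for $Z_{i+1}$ and $\Delta_{i+1}$ after cancelling the linear part, the bounds of Lemma~\ref{lem-SXY3} on $X_i,Y_i$, an induction controlling $\varepsilon_i$ together with the drift of $K_i,\kappa_i$ via $\|S_i\|$ (the paper bookkeeps this by comparing every $\Sigma_i$ to $\Sigma_0$, giving $K_i\leqslant K_0+2\varepsilon_0$ and $\kappa_i\leqslant\kappa_0/(1-4\kappa_0\varepsilon_0)$, a cosmetic difference from your recursive version), and the same telescoping product with Lemma~\ref{lem-eps-u} yielding the rate $0.61\times2^{1-2^{i+1}}u$ and the invertibility of $E_0,F_0$. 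One small correction: because of the term $Y_i\Sigma_iX_i$, the intermediate estimate must be cubic in $K_i+1$, namely $\varepsilon_{i+1}\leqslant 3\,\kappa_i^2(K_i+1)^3\varepsilon_i^2$ rather than your stated $C\kappa_i\varepsilon_i^2(K_i+1)^2\bigl(1+\kappa_i\varepsilon_i(K_i+1)\bigr)$, but this is harmless since the quantity $u_i=\kappa_i^2(K_i+1)^3\varepsilon_i$ you propagate already carries the cubic factor, exactly as in the paper.
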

\begin{proof}
  Let us denote \ for each~$i \geqslant 0$,
  \[ \begin{array}{rclcrcl}
       \varepsilon_{} & = & \varepsilon_0 &  & \varepsilon_i & = & \max (\kappa_i^2 K_i^2\|
       Z_i \|,\kappa_i^2K_i \| \Delta_i \|)\\
       \kappa_{} & = & \kappa_0 & \qquad & \kappa_i & = & \max \left( 1, \;
       \max_{1 \leqslant j < k \leqslant n}  \frac{1}{\abs{ \nobracket \sigma_{i,
       k^{}}^{} - \sigma_{i, j}^{} }\nobracket} \right)\\
       K_{} & = & K_0 &  & K_i & = & \max_{1\le k\le n} \left( 1, \; \abs{ \nobracket
       \sigma_{i, k}^{} }\nobracket \right),
     \end{array} \]
  where $\sigma_{i, 1}^{}, \ldots, \sigma_{i, n}^{}$ denote the diagonal
  entries of $\Sigma_i^{}$. Let us show by induction on $i$ that
  \begin{eqnarray}
    \varepsilon_i & \leqslant & 2^{1 - 2^i} \varepsilon  \label{main-ind-13}\\
    \| \Sigma_i - \Sigma_0 \| & \leqslant & (2 - 2^{2 - 2^i})\frac{2a}{\kappa} \varepsilon
    \label{main-ind-23}
  \end{eqnarray}
with $\dis a=\frac{1}{1-8\varepsilon}$.
  These inequalities clearly hold for $i = 0$. Assuming that the induction
  hypothesis holds for a given $i$ and let us prove it for $i + 1$.
We first prove that $\dis \| \Sigma_{i+1} - \Sigma_0 \|  \leqslant  (2 - 2^{2 - 2^{i+1}}) \frac{2a}{\kappa}\varepsilon$ under the assumption $\dis \| \Sigma_{i} - \Sigma_0 \|  \leqslant  (2 - 2^{2 - 2^i})\frac{2a}{\kappa} \varepsilon$. To do this, at the first step we show that this implies $\dis K-\frac{4a}{\kappa}\varepsilon\leqslant K_i\le K+\frac{4a}{\kappa}\varepsilon$ and 
$\dis \frac{1}{1+8a\varepsilon}\kappa\leqslant\kappa_i\le \frac{\kappa}{1-8a\varepsilon}$.
 Let us prove $\dis K-\frac{4a}{\kappa}\varepsilon
\leqslant K_i\leqslant K+\frac{4a}{\kappa}\varepsilon$.
 We have
\begin{align*}
K_i:=\|\Sigma_i\| &\le \|\Sigma_0\|+\|\Sigma_i-\Sigma_0\|
\\
&\leqslant
K+(2 - 2^{2 - 2^i}) \frac{2a}{\kappa}\varepsilon
\\
&\leqslant K+\frac{4a}{\kappa}\varepsilon
 \leqslant K(1+4a\varepsilon).
\end{align*}
This implies simultaneously
$\dis K_i\geqslant K-\vert K-K_i\vert 
\geqslant K-\frac{4a}{\kappa}\varepsilon$
and $K_i\geqslant K(1-4a\varepsilon)$.
Let us show that $\dis \kappa_i\le \frac{\kappa}{1-8a\varepsilon}$. In fact, if the
 $\sigma_{i,j}$'s 
 are the diagonal values of $\Sigma_{i}^{}$, the Weyl's bound \cite{Weyl1912} implies that
 \[ \abs{ \nobracket \sigma_{i, j}  - \sigma_{0,
     j} } \nobracket \leqslant \| \Sigma_{i} - \Sigma_0 \| 
     \leqslant 
\frac{4a}{\kappa}     \varepsilon \hspace{3em} \text{for} ~1  \leqslant  j \leqslant n. \] 
  So that for $1 \leqslant j < k \leqslant n$, we obtain using 
  $1-8a\varepsilon \geqslant 0$ \ :
  \begin{eqnarray*}
    \abs{ \sigma_{i, k} - \sigma_{i, j} } & \geqslant & \abs{ \sigma_{0, k} -
    \sigma_{0, j} } - \abs{ \sigma_{i, k} - \sigma_{0, k} } - \abs{ \sigma_{i,
    j} - \sigma_{0, j} }\\
    & \geqslant & \abs{ \sigma_{0, k} - \sigma_{0, j} }
      (1 - \kappa \abs{ \sigma_{i, k} - \sigma_{0, k} }
       - \kappa \abs{ \sigma_{i, j} - \sigma_{0, j}
    })\\
    & \geqslant & \abs{ \sigma_{0, j} - \sigma_{0, k} } (1 - 8a \varepsilon)\geqslant 0.
  \end{eqnarray*}
  Finally, we get :
  \begin{eqnarray*}
    \kappa_{i}  & \leqslant & \dfrac{\kappa}{1 -8a \varepsilon} .
  \end{eqnarray*}
 On the other hand the inequality
   \begin{eqnarray*}
    \abs{ \sigma_{i, k} - \sigma_{i, j} } & \leqslant & \abs{ \sigma_{0, k} -
    \sigma_{0, j} } +\abs{ \sigma_{i, k} - \sigma_{0, k} } + \abs{ \sigma_{i,
    j} - \sigma_{0, j} }
  \end{eqnarray*}
  implies in the same way that above
  \begin{align*}
  \kappa_i&\geqslant 
  \frac{1}{1+8a\varepsilon}\kappa.
  \end{align*}
%
   Next we prove (\ref{main-ind-23}) for $i + 1$. We know $S_i=diag(\Delta_i-Z_i\Sigma_i)$. Since $\varepsilon_i=\max(\kappa_i^2K_i^2\|Z_i\|,\kappa_i^2K_i\|\Delta_i\|)$ and $\kappa_i,K_i\geqslant 1$  then $\dis \|S_i\|\le\frac{2}{\kappa_i} \varepsilon_i\leqslant
     \frac{2(1+8a\varepsilon)}{\kappa} 2^{1 - 2^i}\varepsilon$.
It follows :
  \begin{align*}
    \| \Sigma_{i + 1} - \Sigma_0 \| &\leqslant
    \|S_i \| + \| \Sigma_i - \Sigma_0 \|
    \\&\leqslant 
      \frac{2(1+8a\varepsilon)}{\kappa} 2^{1 - 2^i} \varepsilon
    + (2 - 2^{2 - 2^i}) \frac{2a}{\kappa} \varepsilon 
    \\&\leqslant
     \left (2 - 2^{1 - 2^i}\left (
     2-1
          \right )\right)  \frac{2a}{\kappa}\varepsilon
          \qquad \textrm{since $1+8a\varepsilon=a$}
     \\
    & \leqslant   \left (2 - 2^{1 - 2^i}
    \right)  \frac{2a}{\kappa}\varepsilon
  \end{align*}
  But it is eay to see that $\dis 2^{1-2^i}\geqslant 2^{2-2^{i+1}}$. Finally we get
  \begin{align*}
    \| \Sigma_{i + 1} - \Sigma_0 \| &\leqslant   \left (2 - 2^{2-2^{i+1}}
    \right)  \frac{2a}{\kappa}\varepsilon.
  \end{align*}
  Hence we can also write
  \begin{align*}
  K_i-\frac{2a}{\kappa_i}\varepsilon\leqslant \|\Sigma_i\|-\|\Sigma_{i+1}-\Sigma_i\|\leqslant
  K_{i+1}
  \leqslant \|\Sigma_i\|+\|\Sigma_{i+1}-\Sigma_i\|
  \leqslant K_i+\frac{2a}{\kappa_i}\varepsilon
  \end{align*}
  Using more the Weyl's bound we can easily get that
  \begin{align*}
  \frac{\kappa_i}{1+4a\varepsilon}\leqslant\kappa_{i+1}\leqslant
  \frac{\kappa_i}{1-4a\varepsilon}.
  \end{align*}
  Now we bound $\kappa_{i+1}^2K_{i+1}^2\|Z_{i+1}\|$. We
  have
  \begin{eqnarray*}
    Z_{i + 1} & = & Z_i X_i + Y_i Z_i + Y_i (Z_i + I_n) X_i .
  \end{eqnarray*}
Since $\dis \|X_i\|,\|Y_i\|\le \kappa_i(\|\Delta_i\|+K_i\|Z_i\|)\leqslant \frac{2}{\kappa_i K_i}\varepsilon_i$, we can write
  \begin{eqnarray*}
   \kappa_{i+1}^2K_{i+1}^2 \| Z_{i + 1} \| & \leqslant & 
   \frac{\kappa_{i+1}^2K_{i+1}^2}{\kappa_i^3K_i^3}4\varepsilon_i^2
   +\frac{\kappa_{i+1}^2K_{i+1}^2}{\kappa_i^4K_i^4}4\varepsilon_i^3
   +\frac{\kappa_{i+1}^2K_{i+1}^2}{\kappa_i^2K_i^2}4\varepsilon_i^2
   \\
    & \leqslant &
    4 \left(2+\varepsilon_i\right)
    \left(\frac{\kappa_{i+1}K_{i+1}}{\kappa_iK_i}\right)^2\varepsilon_i^2
    \\
    & \leqslant & 4\left(2+\varepsilon_i\right)
    \left(\frac{1+2a\varepsilon}{1-4a\varepsilon}\right)^2\varepsilon_i^2 %
  \end{eqnarray*}
  On the other hand
  \begin{eqnarray*}
    \Delta_{i + 1} & = & \Delta_i X_i + Y_i \Delta_i + Y_i (\Delta_i +
    \Sigma_i) X_i .
  \end{eqnarray*}
Hence
  \begin{eqnarray*}
   \kappa_{i+1}^2K_{i+1} \| \Delta_{i + 1} \| & \leqslant &
   \frac{\kappa_{i+1}^2K_{i+1}}{\kappa_i^3K_i^2}4\varepsilon_i^2
   +\frac{\kappa_{i+1}^2K_{i+1}}{\kappa_i^4K_i^3}4\varepsilon_i^3
   +\frac{\kappa_{i+1}^2K_{i+1}}{\kappa_i^2K_i}4\varepsilon_i^2   
   \\
    & \leqslant & 
     4 \left(2+\varepsilon_i\right)
    \frac{\kappa_{i+1}^2K_{i+1}}{\kappa_i^2K_i}\varepsilon_i^2
    \\
    & \leqslant & 4\left(2+\varepsilon_i\right)
   \frac{1+2a\varepsilon}{(1-4a\varepsilon)^2}\varepsilon_i^2
  \end{eqnarray*}
  It follows
\begin{eqnarray*}
    \varepsilon_{i + 1} & \leqslant & 
 4\left(2+\varepsilon\right)
    \left(\frac{1+2a\varepsilon}{1-4a\varepsilon}\right)^2\varepsilon_i^2    
    \\
    & \leqslant &    
     8\left(2+\varepsilon\right)\left(\frac{1-6\varepsilon}{1-12\varepsilon}\right)^2\varepsilon \, 2^{1-2^{i+1}}\varepsilon
    \\
    & \leqslant &  2^{1-2^{i+1}}\varepsilon
    \qquad \textrm{since  $\dis 8\left(2+\varepsilon\right)\left(\frac{1-6\varepsilon}{1-12\varepsilon}\right)^2\varepsilon\leqslant 1$ for $\varepsilon\leqslant 0.033$.}    
  \end{eqnarray*}
  This completes the proof of the two induction
  hypothesis~(\ref{main-ind-13}--\ref{main-ind-23}) at order $i + 1$.
  Let $W_i = \prod_{k = 0}^i (I_n + X_k)$. Since
  \begin{eqnarray*}
    \| X_k \| & \leqslant & \frac{2}{\kappa_kK_k} \varepsilon_k
    \\
    & \leqslant &   
     \frac{2(1+8a\varepsilon)}{\kappa K (1-4a\varepsilon)} \varepsilon_{} 2^{1 - 2^k}
     \\
    & \leqslant &  
     \frac{2}{\kappa K (1-12\varepsilon)} \varepsilon_{} 2^{1 - 2^k}
  \end{eqnarray*}
  Consequently,
  \begin{eqnarray*}
    \| W_{\infty} - I_n \| & \leqslant & \prod_{i \geqslant 0} (1 + \frac{2}{\kappa K (1-12\varepsilon)}\varepsilon
    2^{1 - 2^i}) - 1\\
    & \leqslant & \frac{4}{\kappa K (1-12\varepsilon)}\varepsilon \quad \text{from \Cref{lem-eps-u}}\\
    & \leqslant & \frac{0.22}{\kappa K}\qquad \textrm{since   $\varepsilon\leqslant 0.033$.}.
  \end{eqnarray*}
   Hence $W_{\infty}$ is invertible and $E_0 = E_{\infty} W_{\infty}^{- 1}$.
  This implies that $E_0$ is invertible. Moreover,
  \begin{eqnarray*}
    \| W_i - W_{\infty} \| & \leqslant & \| W_i \|  \left\| 1 - \prod_{k
    \geqslant i + 1} (1 + \| X_k \|) \right\|\\
    & \leqslant & (1 + \| W_i - I_n \|) \left\| \prod_{k \geqslant 0}^{} (1 +
 \frac{2}{\kappa K (1-12\varepsilon)} \varepsilon    \times 2^{1 -  2^{k + i + 1}}) - 1 \right\|\\
    & \leqslant & (1 + 0.22) \times  \frac{4}{\kappa K (1-12\varepsilon)} \times 2^{1 - 2^{i + 1}} \varepsilon\quad
     \text{from \Cref{lem-eps-u}}\\
    & \leqslant & \frac{8.1}{\kappa K} \times 2^{1 - 2^{i + 1}} \varepsilon.
  \end{eqnarray*}
   We deduce that
  \begin{eqnarray*}
    \| E_i - E_{\infty} \| & \leqslant &  \frac{8.1}{\kappa K} \times 2^{1 - 2^{i + 1}} \| E_0
    \| \varepsilon.
  \end{eqnarray*}
  In the same way we show that $F_0$ is invertible and
  \begin{eqnarray*}
    \| F_i - F_{\infty} \| & \leqslant &  \frac{8.1}{\kappa K} \times 2^{1 - 2^{i + 1}} \| F_0
    \| \varepsilon.
  \end{eqnarray*}
  Finally
  \begin{align*}
  \|\Sigma_i-\Sigma_\infty\|&\leqslant \sum_{k\geqslant i}\|\Sigma_{k+1}-\Sigma_k\|
  \\&\leqslant
  \sum_{k\geqslant i}\frac{2}{\kappa_k^2K_k}\varepsilon_k
   \\&\leqslant
 \left ( \sum_{k\geqslant 0}2^{-2^k}\right )2^{1-2^{i}}
 \frac{2}{\kappa^2 K (1-12\varepsilon)(1-8\varepsilon)}\varepsilon
 \\&\leqslant
 0.82\times 2.25\times 2^{1-2^{i}}\frac{\varepsilon}{\kappa K}
 \qquad \textrm{since $\dis \sum_{k\geqslant 0}2^{-2^k}\leqslant 0.82$ and $\varepsilon\leqslant 0.033$.}
 \\&
 \leqslant  1.85\times 2^{1-2^{i}}\varepsilon_0 .
  \end{align*}
  The theorem is proved.
\end{proof}
\begin{proposition}\label{complexity}
  The complexity of one Newton iteration in \Cref{theo2} is in $\mathcal O(n^3)$.
\end{proposition}
\begin{proof}
The computation of all the entries $x_{i,j}$, $y_{i,j}$ of $X_i$ and $Y_i$ by the formulas  (\ref{SXY-1}--\ref{SXY-5}) requires in total $\mathcal O(n^2)$ arithmetic operations. The computation of $Z_i, \Delta_i, S_i, E_{i+1}, F_{i+1}$, which requires $6$ backward stable matrix multiplications and diagonal matrix operations, has a complexity in $\mathcal O(n^3)$.
Consequently, the complexity of each iteration is in $\mathcal O(n^3)$.
\end{proof}
\begin{remark}
  It is possible to generalize this approach to the case where the diagonal matrices are
  replaced by Jordan matrices.
\end{remark}

\section{Newton-like method for two simultaneously diagonalizable matrices.} \label{sec-p=2}
Let $M_1, M_2$ be two commuting matrices in $\mathcal{W}_n$, thus $M_1$ and $M_2$ are simultaneously diagonalizable. We aim to find $E, F\in GL_n$ which diagonalize simultaneously $M_1, M_2$ so that: $FM_kE=\Sigma_k\mid k \in\{1, 2\},~\text{and}~\Sigma_1, \Sigma_2\in\mathcal{D}_n'$. This equivalent to find the numerical solution of $f(E, F, \Sigma_1, \Sigma_2)=0$ such that $f:(E, F, \Sigma_1, \Sigma_2)\mapsto(FM_1E-\Sigma_1, FM_1E-\Sigma_1)$

We consider as before the perturbations $E+EX$, $F+ YF$ and
$\Sigma_k + S_k$ of respectively $E$, $F$ and $\Sigma_k$ for $k \text{$\in \{
1, 2 \}$}$ . Letting $Z_k = \tmop{FM}_k E - \Sigma_k$ for $k = 1, 2$, we have:
\begin{align*}
  &(F + \tmop{YF}) M_k (E + \tmop{EX}) - (\Sigma_k+S_k) \\
  &= Z_k - S_k + \Sigma_k
  X + Y \Sigma_k + Z_k X + \tmop{YZ}_k + Y (Z_k + \Sigma_k) X \numberthis \label{k=1,2}
\end{align*}
By assuming $Z_1, Z_2$ are of small norm, the linear system to solve from Equation (\ref{k=1,2}) is the following 
\begin{eqnarray}
    Z_k - S_k + \Sigma_k X + Y \Sigma_k & = & 0, \qquad k = 1, 2
    \label{Delta}
  \end{eqnarray}
  A solution of (\ref{Delta}) is given by the following lemma.
\begin{lemma}
  \label{lem-SXY1} Let $\Sigma_k = \tmop{diag} (\sigma_1^k, \cdots,
  \sigma_n^k)$, $Z_k = (z^k_{i, j})_{1\le i, j\le n}$ be given matrices in $\mathbb{C}^{n\times n}$ for $k\in\{1, 2\}$. Assume that
  $\begin{vmatrix}
    \sigma_j^1  & \sigma_j^2\\
    \sigma_i^1 & \sigma_i^2
  \end{vmatrix} \neq 0$ for $i \neq j$. Let $X$, $Y$, and $S_k$ be
  the matrices defined by
  \begin{eqnarray}
    x_{i, i} & = & 0  \label{eq:23}\\
    x_{i, j} & = & \frac{\begin{vmatrix}
      \sigma_j^1 & z_{i, j}^1\\
      \sigma_j^2 & z_{i, j}^2
    \end{vmatrix}}{\begin{vmatrix}
      \sigma_i^1 & \sigma_j^1\\
      \sigma_i^2 & \sigma_j^2
    \end{vmatrix}}, \qquad i \neq j  \label{eq:24}\\
    y_{i, i} & = & 0 \\
    y_{i, j} & = & - \frac{\begin{vmatrix}
      \sigma_i^1 & z_{i, j}^1\\
      \sigma_i^2 & z_{i, j}^2
    \end{vmatrix}}{\begin{vmatrix}
      \sigma_i^1 & \sigma_j^1\\
      \sigma_i^2 & \sigma_j^2
    \end{vmatrix}}, \qquad i \neq j  \label{eq:26}\\
    S_k & = & \tmop{diag} (Z_k), \qquad k = 1, 2.  \label{SXY-11}
  \end{eqnarray}
  Then we have
  \begin{eqnarray}
    Z_k - S_k + \Sigma_k X + Y \Sigma_k & = & 0, \qquad k = 1, 2
    \label{Delta-S-etc4}
  \end{eqnarray}
  Moreover
  \begin{eqnarray}
    \| X \|, \| Y \| & \leqslant & 2 \kappa \varepsilon K  \label{eq:bnd-29}
  \end{eqnarray}
  where $\varepsilon = \max (\| Z_1 \|, \| Z_2 \|)$, $\kappa = \max \left( 1,
  \max_{i \neq j} \LARGE{\normalsize{\dfrac{1}{\begin{vmatrix}
    \sigma_i^1 & \sigma_j^1\\
    \sigma_i^2 & \sigma_j^2
  \end{vmatrix}} }} \right)$, $K =$\\$ \max (1, \nobracket \max_{i, k}
  \abs{ \sigma_i^k })$.
\end{lemma}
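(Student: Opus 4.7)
The plan is to decouple the matrix equation (\ref{Delta-S-etc4}) into scalar equations by examining it entry-by-entry, exploiting the fact that $\Sigma_k$ is diagonal so $(\Sigma_k X)_{i,j} = \sigma_i^k x_{i,j}$ and $(Y\Sigma_k)_{i,j} = y_{i,j}\sigma_j^k$. First I would split the $(i,j)$ entry of $Z_k - S_k + \Sigma_k X + Y \Sigma_k$ into the diagonal case $i=j$ and the off-diagonal case $i \neq j$, and treat each case separately.

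For the diagonal part, since $x_{i,i} = y_{i,i} = 0$ by (\ref{eq:23}) and the analogous rule for $Y$, the $(i,i)$ entry reduces to $z_{i,i}^k - (S_k)_{i,i} = 0$, which is exactly the choice $S_k = \mathrm{diag}(Z_k)$ from (\ref{SXY-11}). For the off-diagonal part, the $(i,j)$ entry of (\ref{Delta-S-etc4}) becomes the pair of scalar equations
\begin{equation*}
z_{i,j}^k + \sigma_i^k x_{i,j} + \sigma_j^k y_{i,j} = 0, \qquad k = 1,2,
\end{equation*}
which is a $2 \times 2$ linear system in the unknowns $(x_{i,j}, y_{i,j})$ whose coefficient matrix has determinant $\begin{vmatrix}\sigma_i^1 & \sigma_j^1 \\ \sigma_i^2 & \sigma_j^2\end{vmatrix}$, nonzero by hypothesis. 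Applying Cramer's rule to this $2 \times 2$ system yields precisely the formulas (\ref{eq:24}) and (\ref{eq:26}), which establishes (\ref{Delta-S-etc4}).

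For the norm bound (\ref{eq:bnd-29}), I would work with the explicit formula for $x_{i,j}$ and bound the $2 \times 2$ determinant in the numerator by $|\sigma_j^1||z_{i,j}^2| + |\sigma_j^2||z_{i,j}^1| \leq K(|z_{i,j}^1| + |z_{i,j}^2|)$, and the denominator in absolute value from below by $1/\kappa$. Since $\|X\|$ is the maximum over $i$ of the row sum $\sum_j |x_{i,j}|$, summing these bounds over $j$ gives
\begin{equation*}
\sum_j |x_{i,j}| \leq \kappa K \Bigl(\sum_j |z_{i,j}^1| + \sum_j |z_{i,j}^2|\Bigr) \leq \kappa K (\|Z_1\| + \|Z_2\|) \leq 2\kappa K \varepsilon,
\end{equation*}
and the same argument applies to $Y$.

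The only mild obstacle is a bookkeeping one: ensuring the correct signs and row/column indices in the Cramer's rule computation so that the output matches the determinants displayed in (\ref{eq:24})--(\ref{eq:26}) exactly. The key insight that makes the whole lemma trivial once set up is that the linear system decouples into $n(n-1)$ independent $2 \times 2$ systems, one for each off-diagonal entry pair, because the diagonality of $\Sigma_1, \Sigma_2$ kills any coupling between different positions $(i,j)$.
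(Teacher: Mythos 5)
Your proposal is correct and follows essentially the same route as the paper: reduce (\ref{Delta-S-etc4}) entrywise, solve the decoupled $2\times 2$ systems $\sigma_i^k x_{i,j}+\sigma_j^k y_{i,j}+z_{i,j}^k=0$ for the off-diagonal entries (Cramer's rule gives (\ref{eq:24})--(\ref{eq:26})), set $S_k=\tmop{diag}(Z_k)$ for the diagonal, and bound row sums to get (\ref{eq:bnd-29}). Your write-up is in fact more detailed than the paper's, which simply asserts these steps.
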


\begin{proof}
  It is easy to verify that the equation (\ref{Delta-S-etc4}) implies that for
  $i \neq j$,
  \begin{eqnarray*}
    \sigma_i^k x_{i, j} + \sigma_j^k y_{i, j} + z_{i, j^{}}^k & = & 0
  \end{eqnarray*}
  and that the solution of these equations is given by the formula
  {\eqref{eq:24}}, {\eqref{eq:26}}. Choosing $x_{i, i} = y_{i, i}$=0, we take
  $S_k = \tmop{diag} (Z_k + \Sigma_k X + Y \Sigma_k) = \tmop{diag} (Z_k)$
  since $\Sigma_k X + Y \Sigma_k$ is an off-matrix, to satisfy the equation
  (\ref{Delta-S-etc4}). The bounds (\ref{eq:bnd-29}) follows easily
  from {\eqref{eq:24}}, {\eqref{eq:26}}.
\end{proof}

\begin{theorem}\label{theo3}
   \label{th-quad-conv}Let $E_0$, $F_0\in GL_n$ and $\Sigma_{0, k} = \tmop{diag}
  (\sigma_{0, 1}^k, \ldots, \sigma_{0, n}^k)\in\mathcal{D}_n'$, $k = 1, 2$, be given and let
  define the sequences for $i \geqslant 0$ and $k = 1, 2$ by:
  \begin{eqnarray*}
    Z_{i, k} & = & F_i M_k E_i - \Sigma_{i, k} \quad\\
    S_{i, k} & = & \tmop{diag} (Z_{i, k})\\
    E_{i + 1} & = & E_i (I_n + X_i)\\
    F_{i + 1} & = & (I_n + Y_i) F_i\\
    \Sigma_{i + 1, k} & = & \Sigma_{i, k} + S_{i, k},
  \end{eqnarray*}
  where \ $X_i$, $Y_i$ are defined by the formulas (\ref{eq:23}--\ref{eq:26}).
  Let \normalsize $\varepsilon_0 =\max (\| Z_{0, 1} \|, \| Z_{0, 2} \|)$, $\kappa_0 =
  \max \left( 1, \max_{i \neq j} \LARGE{
  \normalsize{\dfrac{1}{\begin{vmatrix}
    \sigma_{0, i}^1&\sigma_{0, j}^1\\
    \sigma_{0, i}^2&\sigma_{0, j}^2
  \end{vmatrix}}}}  \right)$ and $K_0 = \max (1, \nobracket
  \max_{j, k}  \abs{ \sigma_{0, j}^k })$. Assume that
  \begin{eqnarray}\label{u-cond}
    u \assign 4 \varepsilon_0 \kappa_0^2 K_0^3 & \leqslant & 0.094.
  \end{eqnarray}
  Then the sequences $(\Sigma_{i, k,} E_i, F_i)_{i \geqslant 0} $converge
  quadratically to the solution of $FM_k E - \Sigma_k$ for $k = 1, 2$. More
  precisely $E_0$ and $F_0$ are invertible and
  \normalsize{\begin{eqnarray*}
    \| E_i - E_{\infty} \| & \leqslant & 1.46 \times 2^{1 - 2^{i + 1}} \| E_0
    \| u\\
    \| F_i - F_{\infty} \| & \leqslant & 1.46 \times 2^{1 - 2^{i + 1}} \| F_0
    \| u.
  \end{eqnarray*}}
\end{theorem}
\begin{proof}
  Let us denote \ for each~$i \geqslant 0$,
  \[ \begin{array}{rclcrcl}
       \varepsilon_{} & = & \varepsilon_0 &  & \varepsilon_i & = & \max (\|
       Z_{i, 1} \|, \| Z_{i, 2} \|)\\
       \kappa_{} & = & \kappa_0 & \qquad & \kappa_i & = & \max \left( 1, \;
       \max_{1 \leqslant j < k \leqslant n}  \LARGE{
       \normalsize{\dfrac{1}{\begin{vmatrix}
         \sigma_{i, j}^1 & \sigma_{i, k}^1\\
         \sigma_{i, j}^2 & \sigma_{i, k}^2
       \end{vmatrix}}}} \right)\\
       K & = & K_0 &  & K_i & = & \max (1, \nobracket \max_{j, k} (\abs{
       \nobracket \sigma_{i, j}^k } \nobracket)) \nobracket,
     \end{array} \]
  where $\sigma_{i, 1}^k, \ldots, \sigma_{i, n}^k$ are the diagonal entries
  of $\Sigma_{i, k}^{}$. Let us show by induction on $i$ that
  \normalsize{\begin{eqnarray}
    \varepsilon_i & \leqslant & 2^{1 - 2^i} \varepsilon  \label{main-ind-14}
    \\
    \| \Sigma_{i, k} - \Sigma_{0, k} \| & \leqslant & (2 - 2^{2 - 2^i})
    \varepsilon  \label{main-ind-24}
  \end{eqnarray}}
  These inequalities clearly hold for $i = 0$. Assuming that the induction
  hypothesis holds for a given $i$ and let us prove it for $i + 1$. We can notice that $\varepsilon_i \leq 1$. In fact by  induction hypothesis, we have $\varepsilon_i\leq 2^{1-2^i}\varepsilon_0$ and from (\ref{u-cond}) $\varepsilon_0=\frac{u}{4\kappa_0^2K_0^3}\leq 1$, since $u\leq 1$ and $\kappa_0, K_0 \geq 1$. As $2^{1-2^i}\leq 1,~\forall i\geq 0$, we have $\varepsilon_i\leq 1$. We first prove that $\dis \| \Sigma_{i+1,k} - \Sigma_{0,k} \|  \leqslant  (2 - 2^{2 - 2^{i+1}}) \varepsilon$ under the assumption $\dis \| \Sigma_{i,k} - \Sigma_{0,k} \|  \leqslant  (2 - 2^{2 - 2^i})\varepsilon$. To do this, at the first step we show that this implies 
$ K_i\le K+2\varepsilon$ and 
$\dis \kappa_i\le \frac{\kappa}{1-8\kappa\varepsilon(K+\varepsilon)}$.
 Let us prove 
$\dis K_i\leqslant K+2\varepsilon$.
 We have
\begin{align*}
K_i:=\|\Sigma_i\| &\le \|\Sigma_0\|+\|\Sigma_i-\Sigma_0\|
\\
&\leqslant
K+(2 - 2^{2 - 2^i}) \varepsilon
\\
&
 \leqslant K+2\varepsilon.
\end{align*}
Let us show that $\dis \kappa_i\le \frac{\kappa}{1-8\kappa\varepsilon(K+\varepsilon)}$. In fact, if the
 $\sigma_{i,j^k}$'s 
 are the diagonal values of $\Sigma_{i}^{k}$, we have
 $\abs{ \nobracket \sigma_{i , j}^k - \sigma_{0, j}^k } \nobracket \leqslant \| \Sigma_{i , k} - \Sigma_{0, k}
  \| \leqslant 2 \varepsilon $ for $1 \leqslant j \leqslant n$ and $k = 1, 2$. It follows :
  \begin{eqnarray*}
    \abs{ \nobracket \sigma_{i , j}^1 \sigma_{i , k}^2 - \sigma_{0, j}^1
    \sigma_{0, k^{}}^2 } \nobracket & = & \abs{ \nobracket \sigma_{i , j}^1
    \sigma_{i , k}^2 - \sigma_{0, j}^1 \sigma_{i , k^{}}^2 + \sigma_{0,
    j}^1 \sigma_{i , k}^2 - \sigma_{0, j}^1 \sigma_{0, k}^2 } \nobracket
    \text{}\\
    & = & \abs{ \sigma_{i , k}^2 (\sigma_{i , j}^1 - \sigma_{0, j}^1) +
    \sigma_{0, j^{}}^1 (\nobracket \sigma_{i , k}^2 - \sigma_{0, k}^2) }
    \nobracket\\
    & \leqslant & 2 \varepsilon \abs{ \sigma_{i , k}^2 } + 2 \varepsilon \abs{
    \sigma_{0, j^{}}^1 }\\
    & \leqslant & 2 \varepsilon (K + 2 \varepsilon) + 2 \varepsilon K = 4
    \varepsilon (K + \varepsilon) .
  \end{eqnarray*}
  \qquad Now,
  \begin{eqnarray*}
  \abs{\sigma_{i , j}^1 \sigma_{i , k}^2 - \sigma_{i , k}^1 \sigma_{i
    + 1, j}^2 }&\geqslant& \\
    \abs{ \sigma_{0, j}^1 \sigma_{0, k}^2 - \sigma_{0,
    k}^1 \sigma_{0, j}^2 } - \abs{ \sigma_{0, j}^1 \sigma_{0, k}^2 - \sigma_{i +
    1, j}^1 \sigma_{i , k}^2 } - \abs{ \sigma_{i , k}^1 \sigma_{i , j}^2
    - \sigma_{0, k}^1 \sigma_{0, j}^2 }&\geqslant& \\
    \abs{ \sigma_{0, j}^1 \sigma_{0, k}^2 - \sigma_{0, k}^1
    \sigma_{0, j}^2 } (1 - 8 k \varepsilon (K + \varepsilon)) .
 \end{eqnarray*}
  Finally, we get :
  \begin{eqnarray*}
    \kappa_{i } & \leqslant & \dfrac{\kappa}{1 - 8 \kappa \varepsilon (K
    + \varepsilon)} .\\
    &  &
  \end{eqnarray*} 
 To prove (\ref{main-ind-24}) it is sufficient to write
 \begin{align*}
 \|\Sigma_{i+1,k}-\Sigma_{0,k}\|&\leqslant \|S_{i,k}\|+\|\Sigma_{i+1,k}-\Sigma_{0,k}\|
 \\
 &\leqslant
 \varepsilon_i+(2-2^{2-2^i})\varepsilon
 \\
 &\leqslant
( 2^{1-2^i}+2-2^{2-2^i} )\varepsilon
\leqslant  (2-2^{2-2^{i+1}})\varepsilon.
 \end{align*}
Let us prove (\ref{main-ind-14}). Since we have
  \begin{eqnarray*}
    Z_{i + 1, k} & = & Z_{i, k} X_i + Y_i Z_{i, k} + Y_i (Z_{i, k} +
    \Sigma_{i, k}) X_i .
  \end{eqnarray*}
  we deduce 
  \begin{eqnarray*}
    \| Z_{i + 1, k} \| & \leqslant & 2 \varepsilon_i^2 \kappa_i K_i + 2
    \varepsilon_i^2 \kappa_i K_i + 4 \varepsilon_i^2 \kappa_i^2 K_i^2
    (\varepsilon_i + K_i)
    \\
    \text{} & \leqslant & 4 \varepsilon_i^2 \kappa_i^2 K_i + 4 \varepsilon_i^2
    \kappa_i^2 K_i^2 (1 + K_i) \quad \text{since $\varepsilon_i \leqslant 1
    \text{and $\kappa_i \geqslant 1$}$}\\
    & \leqslant & 3 \times 4 \varepsilon_i^2 \kappa_i^2 K_i^3 = 12
    \varepsilon_i^2 \kappa_i^2 K_i^3 \qquad \text{since $K_i \geqslant 1.$}
    \hspace{3em}
  \end{eqnarray*}
  It follows
  \begin{eqnarray*}
    \varepsilon_{i + 1} & \leqslant & \frac{12 \kappa_{}^2 (K + 2
    \varepsilon)^3}{(1 - 8 \kappa_{} \varepsilon (K + \varepsilon))^2}
    \varepsilon_i^2 \quad \leqslant \frac{12 \varepsilon \kappa_{}^2 (K + 2
    \varepsilon)^3}{(1 - 8 \kappa_{} \varepsilon (K + \varepsilon))^2} 2^{2 -
    2^{^{i + 1}}} \varepsilon\\
    & \leqslant & 3 \frac{\left( 1 + \frac{u}{2} \right)^3}{\left( 1 - 2 u
    \left( 1 + \frac{u}{4} \right) \right)^2_{}} u 2^{2 - 2^{^{i + 1}}}
    \varepsilon \quad \tmop{since} \quad \frac{\varepsilon}{K} \leqslant
    \frac{u}{4}, \kappa \varepsilon \leqslant \frac{u}{4}\\
    & \leqslant & 2^{1 - 2^{i + 1}} \varepsilon \quad \tmop{since} \quad 3
    \frac{\left( 1 + \frac{u}{2} \right)^3}{\left( 1 - 2 u \left( 1 +
    \frac{u}{4} \right) \right)^2_{}} \leqslant 2^{- 1} \tmop{for} u \leqslant
    0.094.
  \end{eqnarray*}  
  Let $W_i = \prod_{k = 0}^i (I_n + X_k)$. Since
  \begin{eqnarray*}
    \| X_l \| & \leqslant & 2 \kappa_l K_l \varepsilon_l\\
    & \leqslant & 2 \frac{\kappa}{1 - 8 \kappa \varepsilon (K + \varepsilon)}
    (K + 2 \varepsilon) \varepsilon_{} 2^{1 - 2^l}\\
    & \leqslant & \frac{\left( 1 + \frac{u}{2} \right) u}{2 \left( 1 - 2 u
    \left( 1 + \frac{u}{4} \right) \right)} 2^{1 - 2^l}\\
    & \leqslant & 0.65 \times 2^{1 - 2^l} u \quad \tmop{since} u \leqslant
    0.094.
  \end{eqnarray*}
  Consequently,
  \begin{eqnarray*}
    \| W_{\infty} - I_n \| & \leqslant & \prod_{i \geqslant 0} (1 + 0.65
    \times 2^{1 - 2^i} u) - 1\\
    & \leqslant & 1.3 u \quad \text{from \Cref{lem-eps-u}}\\
    & \leqslant & 1.3 \times 0.094 = 0.1222
  \end{eqnarray*}
  Hence $W_{\infty}$ is invertible and $E_0 = E_{\infty} W_{\infty}^{- 1}$.
  This implies that $E_0$ is invertible. Moreover,
  \begin{eqnarray*}
    \| W_i - W_{\infty} \| & \leqslant & \| W_i \|  \left\| 1 - \prod_{k
    \geqslant i + 1} (1 + \| X_k \|) \right\|\\
    & \leqslant & (1 + \| W_i - I_n \|) \left\| \prod_{k \geqslant 0}^{} (1 +
    0.059 \times 2^{1 - 2^{k + i + 1}}) - 1 \right\|\\
    & \leqslant & (1 + 0.1222) \times 1.3 \times 2^{1 - 2^{i + 1}} u\\
    & \leqslant & 1.46 \times 2^{1 - 2^{i + 1}} u.
  \end{eqnarray*}
  We deduce that
  \begin{eqnarray*}
    \| E_i - E_{\infty} \| & \leqslant & 1.46 \times 2^{1 - 2^{i + 1}} \| E_0
    \| u.
  \end{eqnarray*}
  In the same way we show that $F_0$ is invertible and
  \begin{eqnarray*}
    \| F_i - F_{\infty} \| & \leqslant & 1.46 \times 2^{1 - 2^{i + 1}} \| F_0
    \| u.
  \end{eqnarray*}
  The theorem is proved.
\end{proof}
\section{Convergence of a pencil of simultaneously diagonalizable matrices.}\label{sec-cvg-family}

In this section we present two strategies to solve the system (\ref{eq1}) of a pencil of commuting matrices $(M_i)_{1\le i\le p}$ in $\mathcal{W}_n$. The first strategy is trivial and consists of finding the common diagonalizers $E$ and $F$ of the pencil by numerically solving one of the systems $(FE - I_n, FM_1 E - \Sigma_1) = 0$ or $(FM_1 E - \Sigma_1, FM_2 E - \Sigma_1) = 0$ using \Cref{theo2} or \Cref{theo3}. Next we deduce the remaining diagonal matrices $\Sigma_i$ using the formulas
\begin{eqnarray*}
  \Sigma_{i, k} & = & \frac{E (:, k)^{\ast} M_i E (:, k)}{E (:, k)^{\ast} E
  (:, k)} \qquad 1 \leqslant k \leqslant n, \quad 2~\text{or}~3 \leqslant i \leqslant p,
\end{eqnarray*}
where $E (:, k)$ is the \emph{k-th} column in $E$.\\ In this strategy we use that a diagonalizer of one or two matrices of the pencil can diagonalize the other matrices of the pencil. We note that, in general, we don't have this property for simultaneously diagonalizable matrices, where, for instance, it is posssible to find a diagonalizer of $M_1$ which is not a common diagonalizer for the other matrices of the pencil. Nevertheless, this property holds here since we suppose that the matrices $M_i$ have simple eigenvalues. 

Another strategy is to find a ``good'' linear combination of the $M_i$'s. This
is based on \Cref{good-combination} and \Cref{theo6}.

\begin{lemma}
  \label{good-combination}Let us suppose that the $M_i$ commute pairwise and
  they are linearly independent, i.e., $\sum_{i = 1}^p a_i M_i = 0 \Rightarrow
  a_i = 0, i = 1 : p$. Let $E\in GL_n$ and $\Sigma_i\in\mathcal{D}_n'$ be such that
  \begin{eqnarray*}
    E^{- 1} M_i E - \Sigma_i & = & 0, \quad i = 1 : p.
  \end{eqnarray*}
  Let $S \in \mathbb{C}^{n \times p}$
  and the column $i$ of $S$ is the diagonal of $\Sigma_i$. Let $\sigma =
  (\sigma_{1,} \ldots, \sigma_n)$ and $\Sigma = \tmop{diag} (\sigma)$. Then
  the matrix $S$ has a full rank and $\alpha = (S^{\ast} S)^{- 1} S^{\ast}
  \sigma$ satisfies
  \begin{eqnarray*}
    \sum_{i = 1}^p \alpha_i E^{- 1} M_i E - \Sigma & = & 0.
  \end{eqnarray*}
\end{lemma}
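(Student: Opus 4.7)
The plan is to split the lemma into two pieces: the rank statement about $S$, and the matrix identity, bridged by the vector equation $S\alpha = \sigma$.

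For the rank claim, I would observe that conjugation $\Phi\colon M \mapsto E^{-1} M E$ is a linear automorphism of $\mathbb{C}^{n\times n}$, so $\sum_{i=1}^{p} a_i M_i = 0$ if and only if $\sum_{i=1}^{p} a_i \Sigma_i = 0$. Since each $\Sigma_i$ is diagonal, the latter is equivalent to the vector equation $S a = 0$, where $a = (a_1,\ldots,a_p)^t$ and the $i$-th column of $S$ is the diagonal of $\Sigma_i$. The hypothesis that the $M_i$'s are linearly independent therefore translates directly into $\ker S = \{0\}$, i.e.\ $S$ has full column rank $p$ (note this implicitly forces $p \leqslant n$). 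A standard consequence is that $S^{\ast} S \in \mathbb{C}^{p\times p}$ is Hermitian positive definite and hence invertible, so the formula $\alpha = (S^{\ast} S)^{-1} S^{\ast} \sigma$ is well-defined.

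For the matrix identity, I would compute
$$E^{-1}\Big(\sum_{i=1}^{p} \alpha_i M_i\Big) E \;=\; \sum_{i=1}^{p} \alpha_i \Sigma_i,$$
which is diagonal and whose $j$-th diagonal entry equals $\sum_{i=1}^{p} \alpha_i S_{j,i} = (S\alpha)_j$. Thus the claim $\sum_{i=1}^{p} \alpha_i E^{-1} M_i E - \Sigma = 0$ is equivalent to the linear system $S \alpha = \sigma$. With the normal-equations choice of $\alpha$, the product $S \alpha$ is the orthogonal projection of $\sigma$ onto $\mathrm{col}(S)$, so equality holds exactly when $\sigma \in \mathrm{col}(S)$. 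In the intended application of this lemma, $\sigma$ is produced as $S \alpha_0$ for some preliminary coefficient vector $\alpha_0$ chosen so that $\Sigma = \mathrm{diag}(\sigma)$ has pairwise distinct entries; then $\sigma$ lies in $\mathrm{col}(S)$ automatically and the pseudo-inverse formula recovers $(S^{\ast} S)^{-1} S^{\ast} S \alpha_0 = \alpha_0$.

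The main obstacle is conceptual rather than technical: one must clearly separate the algebraic statement (full rank of $S$ from independence of the $M_i$'s) from the consistency condition $\sigma \in \mathrm{col}(S)$ that upgrades the least-squares solution of $S\alpha = \sigma$ into an exact one. Once this consistency is granted, both halves of the argument reduce to short linear-algebra manipulations, relying only on the bijectivity of $\Phi$ and the positive-definiteness of $S^{\ast} S$.
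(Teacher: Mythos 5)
Your argument follows the same route as the paper's proof: translate $\sum_i \alpha_i E^{-1}M_iE-\Sigma=0$ into the vector equation $S\alpha=\sigma$, deduce full column rank of $S$ from the linear independence of the $M_i$ (you do this cleanly via the conjugation automorphism $M\mapsto E^{-1}ME$, which the paper leaves implicit), and then bring in the normal equations. The genuine difference is in the last step, and there your version is the more careful one: you point out that $\alpha=(S^{\ast}S)^{-1}S^{\ast}\sigma$ is only the least-squares solution, so the exact identity $S\alpha=\sigma$ holds precisely when $\sigma\in\mathrm{col}(S)$ --- a condition that is automatic only when $p=n$, and fails for a generic $\sigma$ when $p<n$. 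The paper's proof passes over this ("Consequently $\alpha=(S^{\ast}S)^{-1}S^{\ast}\sigma$"), in effect assuming solvability of $S\alpha=\sigma$, so the consistency hypothesis you isolate is a real gap in the lemma as stated rather than a defect of your argument. One caveat on your closing remark: the way the lemma is actually used (Theorem \ref{theo6}) takes $\sigma=\bigl(1,e^{2i\pi/n},\ldots,e^{2i(n-1)\pi/n}\bigr)$, chosen independently of $S$, and only assumes the residual $\|F_0ME_0-\Sigma\|$ is small rather than zero; so your guess that $\sigma$ is produced as $S\alpha_0$ does not match the paper, and the consistency condition you identified is exactly what would have to be added (or the conclusion weakened to a least-squares statement) for the lemma to hold as written.
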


\begin{proof}
  Since the matrices $M_i$ are simultaneously diagonalizable there exists $E$ be such that
  $E^{- 1} M_i E - \Sigma_i = 0$. \ The condition
  \begin{eqnarray*}
    \sum_{i = 1}^p \alpha_i \Sigma_i - \Sigma & = & 0
  \end{eqnarray*}
  is written as $S \alpha = \sigma$ where $S \in \mathbb{C}^{n \times p}$. The
  assumption $\sum_{i = 1}^p a_i M_i = 0 \Rightarrow a_i = 0, i = 1 : p$
  implies that the matrix has a full rank. Consequently,
  \begin{eqnarray*}
    \alpha & = & (S^{\ast} S)^{- 1} S^{\ast} \sigma .
  \end{eqnarray*}
  The lemma follows.
\end{proof}

\begin{theorem}\label{theo6}
  Let $M_1, \ldots, M_p \in \mathbb{C}^{n\times n}$ be  $p$ simultaneously diagonalizable matrices and verify the
  assumption of linearly independent. Let us consider matrices $E_0$, $F_0$
  and $\Sigma_{0, i} = \tmop{diag} (F_0 ME_0)$, $i = 1 : p$. Let us define the
  matrix $S \in \mathbb{C}^{n \times p}$ in which the column $i$ is the
  diagonal of $\Sigma_{0, i}$. Let \normalsize{$\sigma =\left( 1, e^{\frac{2 i \pi}{n}},
  \ldots, e^{\frac{2 i (n - 1) \pi}{n}} \right)$}, $\Sigma = \tmop{diag}
  (\sigma)$ and $\alpha = (S^{\ast} S)^{- 1} S^{\ast} \sigma$. We consider
  the system
  \begin{eqnarray}
    \left( \begin{array}{c}
      EF - I_n\\
      FME - \Sigma
    \end{array} \right) & = & 0  \label{eq-normalisé}
  \end{eqnarray}
  where $M = \sum_{i = 1}^p \alpha_i M_i$. If
  \begin{eqnarray*}
    {n}^2\mathrm{max}(\|Z_0\|,\|\Delta_0\|) & \leqslant & 16\times 0.033
  \end{eqnarray*}
  then $(F_0, E_0, \Sigma)$ satisfies the condition (\ref{cond-convergence})
  of \Cref{theo2}.
\end{theorem}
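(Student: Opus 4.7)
The plan is to verify condition (\ref{cond-convergence}) by computing explicit bounds on the two quantities $\kappa_0$ and $K_0$ attached to the specific diagonal $\Sigma$ defined from the $n$-th roots of unity, then substituting into $u = \kappa_0^2 (K_0+1)^3 \varepsilon_0$.

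First I would compute $K_0$. Since $\sigma_j = e^{2i(j-1)\pi/n}$ lies on the unit circle, $|\sigma_j|=1$ for every $j$, so $K_0 = \max(1, \max_j |\sigma_j|) = 1$. Consequently $(K_0+1)^3 = 8$, which is the gain from this normalization: if $\Sigma$ were unconstrained, this factor could be arbitrarily large.

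Second I would bound $\kappa_0$. A direct computation gives
$$|\sigma_j - \sigma_k| \;=\; \bigl|e^{2i(j-1)\pi/n} - e^{2i(k-1)\pi/n}\bigr| \;=\; 2\sin\!\Bigl(\tfrac{|j-k|\pi}{n}\Bigr),$$
and the minimum over $j\neq k$ is attained for $|j-k|=1$, equal to $2\sin(\pi/n)$. Using the classical Jordan inequality $\sin(x) \geq \frac{2x}{\pi}$ on $[0,\pi/2]$, I get $2\sin(\pi/n) \geq 4/n$ for every $n \geq 2$, hence $\kappa_0 \leq n/4$. This is the main (but routine) analytic step.

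Third, I would put the pieces together. The triple $(F_0, E_0, \Sigma)$ is to be fed into \Cref{theo2} with $\Delta_0 = F_0 M E_0 - \Sigma$ of norm $\varepsilon$, and with $Z_0 = F_0 E_0 - I_n$ assumed to be controlled by $\varepsilon$ as well (in the intended application, $F_0$ is taken as the inverse of $E_0$ so that $Z_0 = 0$ and $\varepsilon_0 = \varepsilon$). Then
$$u \;=\; \kappa_0^2 (K_0+1)^3 \varepsilon_0 \;\leq\; \Bigl(\tfrac{n}{4}\Bigr)^{\!2} \cdot 8 \cdot \varepsilon \;=\; \tfrac{n^2 \varepsilon}{2},$$
and the hypothesis $n^2 \varepsilon \leq 0.272$ gives exactly $u \leq 0.136$, which is condition (\ref{cond-convergence}). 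The existence and well-posedness of $\alpha = (S^* S)^{-1} S^* \sigma$ (so that $M = \sum_i \alpha_i M_i$ is meaningful) is guaranteed by \Cref{good-combination}, since the $M_i$ are assumed linearly independent and $S$ therefore has full column rank. I expect no serious obstacle: the entire argument reduces to the two explicit bounds $K_0 = 1$ and $\kappa_0 \leq n/4$ coming from the geometry of the $n$-th roots of unity.
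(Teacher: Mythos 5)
Your proof is correct and follows essentially the same route as the paper's: $K_0=1$ from the roots of unity lying on the unit circle, $\kappa_0 \leqslant n/4$ via $\sin(\pi/n)\geqslant 2/n$, and the substitution $u \leqslant \tfrac{n^2\varepsilon}{2}\leqslant 0.136$. Your explicit remark that $\|Z_0\|$ must also be controlled (e.g.\ $F_0=E_0^{-1}$) is a point the paper's proof leaves implicit, but it is not a different argument.
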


\begin{proof}
  In this case the quantity $\kappa$ defined in the \Cref{theo2}
  is equal to
  \begin{eqnarray*}
    \kappa & = & \frac{1}{2 ~\abs{ \nobracket \sin \left( \frac{\pi}{n} \right) }
    \nobracket}\\
    & \leqslant & \frac{n}{4} \quad \tmop{since} ~\abs{ \nobracket \sin \left(
    \frac{\pi}{n} \right) } \nobracket \geqslant \frac{2}{n} ~\tmop{for} ~n
    \geqslant 2.
  \end{eqnarray*}
  Since $K_0 = 1$ we get
  \begin{eqnarray*}
    \varepsilon_0= \max (\kappa_0^2 K_0^2\| Z_0 \|,
  \kappa_0^2K_0\| \Delta_0 \|)\leq \frac{n^2}{16} \max(\|Z_0\|,\|\Delta_0\|).
  \end{eqnarray*}
  The condition $$\mathrm{max}(\|Z_0\|,\|\Delta_0\|) \leq  0.033~\frac{16}{{n}^2},$$ gives the result.
\end{proof}
\section{Numerical illustration}\label{sec-exp}
We use a \textsc{Julia} implementation of the Newton sequences in the numerical experiments. The experimentation has been done on a Dell Windows desktop with 8 GB memory and Intel 2.3 GHz CPU. We use the Julia package \texttt{ArbNumerics} for the computation in high precision. 
\subsection{Simulation}
In this section we apply the Newton iterations presented in \Cref{theo2} (resp. \Cref{theo3}) on examples of diagonalizable matrices (resp. of two simultaneously diagonalizable matrices). We validate experimentally the sufficiency of the condition established in \Cref{theo2} (resp. \Cref{theo3}) to have a quadratic sequence (\Cref{table1,table2,table5,table6}). On the other hand, as this condition is sufficient but not necessary, we show through some other examples how this Newton sequence starting from an initial point which is not verifying this condition could converge quadratically (\Cref{table3,table4,table7,table8}). We note that the the computation in the aforementioned tables is done in high precision. Nevertheless, we test also the two Newton-type sequences using machine precision (\Cref{newton-double,newton-double1}) and this to show that these sequences have the same numerical behavior of a classical Newton method, i.e., if the solution is in the neighborhood of the initial point the Newton-type iterations will converge towards this solution with a few number of iterations and the residual error obtained at the end is in double precision.

This allows us to have an heuristic estimation on the numerical dependency of the Newton sequences from this condition to converge. Furthermore, these examples reveal the possibility of achieving computation in such problem with high precision. For example, in the case of a diagonalizable matrix of simple eigenvalues, we can compute its eigenvalues using one of the solvers which works with a double precision. Then we take this point as an initial point for the Newton sequence of \Cref{theo2} in order to increase the precision. Hereafter, we give some details about the tests: \emph{Test-1} for \Cref{theo2} and \emph{Test-2} for \Cref{theo3}, considered in this section.\\

\textbf{\emph{Test-1}.} Let $\mathbb{K}=\mathbb{R}~\text{or}~\mathbb{C}$, $M=E\Sigma E^{-1}+10^{-e}A$, where $e\in\{3, 6\}$. The matrices $E$, $\Sigma$, and $A\in\mathbb{K}^{n\times n}$ are chosen randomly following standard normal distributions such that $E$ is invertible, $\Sigma$ is diagonal with $n$ different diagonal entries and $A$ is a random square matrix obeying normal distribution of size $n$ and Frobenius norm equal to 1. Since $M$ is a small perturbation of $E\Sigma E^{-1}$, more precisely $\|M-E\Sigma E^{-1}\|_{Frob}=10^{-e}$, $M$ is a diagonalizable matrix of simple eigenvalues. Herein, we apply the Newton iteration of \Cref{theo2} on $M$ with initial point $E_0=E$, $F0=E^{-1}$ and $\Sigma_0=\Sigma$. The residual error reported in this test at iteration $k$ is given by: \begin{center}
$\text{err}_{res}=\max(\|F_kE_k-I_n\|, \|F_kME_k-\Sigma_k\|).$
\end{center}

\textbf{\emph{Test-2}.} Let $\mathbb{K}=\mathbb{R}~\text{or}~\mathbb{C}$, $M_1=F^{-1}\Sigma_1E^{-1},~M_2=F^{-1}\Sigma_2E^{-1}$, where $E$, $F$, $\Sigma_1$ and $\Sigma_2\in\mathbb{K}^{n\times n}$ are randomly sampled according to standard normal distributions, such that $E$ and $F$ are invertible, $\Sigma_1$ and $\Sigma_2$ are diagonal with $n$ different diagonal entries. The Newton iteration in \Cref{theo3} is applied on $M_1$ and $M_2$ with initial point $E_0$, $F_0$, $\Sigma_{0,1}$ and $\Sigma_{0,2}$, such that these matrices are obtained by applying a small perturbation on respectively  $E$, $F$, $\Sigma_1$ and $\Sigma_2$ as follows:\\
$E_0=E+10^{-e}A$, $F_0=F+10^{-e}B$, $\Sigma_{0,1}=\Sigma_1+10^{-e}C$, $\Sigma_{0,2}=\Sigma_2+10^{-e}D$, where $e\in\{3, 6\}$, $A$ and $B$ (resp. $C$ and $D$) are random square matrices (resp. random diagonal matrices with different diagonal entries) of size $n$ and Frobenius norm equal to 1, with entries in $\mathbb{K}$ following standard normal distributions. The residual error reported in this test at iteration $k$ is given by: \begin{center}
$\text{err}_{res}=\max(\|F_kM_1E_k-\Sigma_{k,1}\|, \|F_kM_2E_k-\Sigma_{k,2}\|).$
\end{center}

We notice that the condition established in \Cref{theo2} (resp. \Cref{theo3}) is reached in \emph{Test-1} (resp. \emph{Test-2}) for matrices of size $10$ with order of perturbation equal to $10^{-6}$, and we can see in \Cref{table1,table2,table5,table6} that the Newton sequences with initial point verifying the condition in the associated theorem converge quadratically. We can notice also that by increasing the perturbation up to $10^{-3}$ (the initial point does not verify the condition in the associated theorem), the Newton sequences converge quadratically for different sizes of matrices $n=10,~50,~100$ (see \Cref{table3,table4,table7,table8}). Moreover, we can notice in \Cref{newton-double} the Newton-type iteration of \Cref{theo2} applied in double precision converges with a few number of iterations $\sim$ 5 and the final residual error measured with the Frobenius norm is of order machine precision $\sim 10^{-14}$ and it is of the same order obtained by the standard Julia method \texttt{eigen} to compute the eigen decomposition. The same remarks are valid for \Cref{newton-double1} where the Newton-type sequence of \Cref{theo3} needs, in double precision, a few iterations to converges towards the solution given by using the Frobenius norm a residual error of order machine precision.     

\begin{table}[ht!]
\centering
\caption{The computational results throughout 7 iterations of an example of implementation of \emph{Test-1} with $\mathbb{K}=\mathbb{R},~n=10~\text{and}~e=6$ in precision $1024$.}
\begin{tabular}{ |c|c|c| }
 \hline
 Iteration & $ \varepsilon:=\max (\kappa_0^2 K_0^2\| Z_0 \|,
  \kappa_0^2K_0\| \Delta_0 \|)\le0.033$&$\text{err}_{res}$  \\ \hline
 1 &0.00131&9.33$e-6$ \\
 2 &2.39$e-8$&1.06$e-10$ \\
  3 &1.68$e-18$&7.49$e-21$  \\
   4 &2.93$e-38$&1.31$e-40$\\
    5 &4.21$e-78$&1.87$e-80$ \\
    6&1.17$e-157$&5.24$e-160$\\
    7& 4.16$e-288$&6.20$e-293$ \\
 \hline
\end{tabular}

\label{table1}
\end{table}
\begin{table}[ht!]
\centering
\caption{The computational results throughout 7 iterations of an example of implementation of \emph{Test-1} with $\mathbb{K}=\mathbb{C},~n=10~\text{and}~e=6$ in precision $1024$.}
\begin{tabular}{ |c|c|c| }
 \hline
 Iteration &$ \varepsilon:=\max (\kappa_0^2 K_0^2\| Z_0 \|,
  \kappa_0^2K_0\| \Delta_0 \|)\le0.033$ &$\text{err}_{res}$  \\ \hline
 1 &0.02581 &2.76$e-4$ \\
 2 & 3.49$e-6$ & 2.33$e-8$ \\
  3 & 9.51$e-14$& 6.34$e-16$  \\
   4 &5.31$e-29$ & 3.54$e-31$\\
    5 &1.96$e-59$ & 1.31$e-61$ \\
    6&3.02$e-120$ & 2.01$e-122$\\
    7& 4.58$e-242$ & 3.05$e-244$ \\
 \hline
\end{tabular}

\label{table2}
\end{table}
\begin{table}[ht!]
\centering
\caption{The residual error throughout 7 iterations given by the implementation of \emph{Test-1} with $\mathbb{K}=\mathbb{R}, e=3$ and $n=10, 50, 100$ in precision $1024$.}
\begin{tabular}{ |c|c|c|c| }
 \hline
 Iteration & $n=10$ & $n=50$&$n=100$ \\ \hline
 1 &8.57$e-3$ &7.93$e-2$ &3.22$e-2$ \\
 2 &1.91$e-4$ &5.76$e-2$&1.38$e-2$\\
  3 & 1.58$e-8$&6.19$e-3$&6.12$e-4$\\
   4 & 4.79$e-16$&8.74$e-5$&5.42$e-7$\\
    5 & 3.56$e-31$&1.31$e-8$&3.83$e-13$\\
    6& 1.39$e-61$&2.39$e-16$&1.80$e-25$\\
    7&1.91$e-122$&7.03$e-32$&3.81$e-50$ \\
 \hline
\end{tabular}

\label{table3}
\end{table}
\begin{table}[ht!]
\centering
\caption{The residual error throughout 7 iterations given by the implementation of \emph{Test-1} with $\mathbb{K}=\mathbb{C}, e=3$ and $n=10, 50, 100$ in precision $1024$.}
\begin{tabular}{ |c|c|c|c| }
 \hline
 Iteration & $n=10$ & $n=50$&$n=100$ \\ \hline
 1 &8.84$e-3$ &9.75$e-2$&1.61$e-2$ \\
 2 &8.59$e-6$ &6.39$e-5$&1.03$e-4$\\
  3 &3.91$e-11$ &3.99$e-9$&4.68$e-9$\\
   4 &9.87$e-22$ &1.87$e-17$&3.13$e-17$\\
    5 &7.60$e-43$ &4.42$e-34$&8.84$e-34$\\
    6&5.14$e-85$ &2.50$e-67$&9.45$e-67$\\
    7&2.64$e-169$ &8.28$e-134$&1.05$e-132$ \\
 \hline
\end{tabular}

\label{table4}
\end{table}
\begin{table}[ht!]
\centering
\caption{The residual error throughout 5 iterations given by the implementation of \emph{Test-1} with $\mathbb{K}=\mathbb{R}, e=3$ and $n=10, 20, 30$, in double precision.}
\begin{tabular}{ |c|c|c|c| }
 \hline
 Iteration & $n=10$ & $n=20$&$n=30$ \\ \hline
 1  &4.78$e-3$  &1.01$e-2$ &1.01$e-2$\\
 2 &4.71$e-3$ &2.55$e-3$&1.14$e-3$ \\
  3 &2.29$e-5$ &1.97$e-5$ &4.08$e-7$ \\
   4 &1.43$e-9$ &2.36$e-10$ &2.26$e-13$ \\
    5 &4.06$e-15$ &1.23$e-14$ &5.04$e-14$ \\\hline
    $\|M-E_{\texttt{eigen}}\Sigma_{\texttt{eigen}} E^{-1}_{\texttt{eigen}}\|_{Frob}$&9.49$e-15$  &2.83$e-14$&7.45$e-14$\\\hline
    $\|M-E_{\texttt{newton}}\Sigma_{\texttt{newton}} E^{-1}_{\texttt{newton}}\|_{Frob}$&2.96$e-15$ &1.01$e-14$&3.42$e-14$ \\
 \hline
\end{tabular}

\label{newton-double}
\end{table}
\begin{table}[ht!]
\centering
\caption{The computational results throughout 7 iterations of an example of implementation of \emph{Test-2} with $\mathbb{K}=\mathbb{R},~n=10~\text{and}~e=6$ in precision $1024$.}
\begin{tabular}{ |c|c|c| }
 \hline
 Iteration & $4\kappa^2K^3\varepsilon\le0.094$&$\text{err}_{res}$  \\ \hline
 1 &7.65$e-2$&6.72$e-6$ \\
 2 &1.73$e-7$&1.52$e-11$ \\
  3 &5.58$e-18$&4.90$e-22$  \\
   4 &5.49$e-39$&4.82$e-43$\\
    5 & 3.10$e-81$& 2.73$e-85$ \\
    6& 2.28$e-165$& 2.01$e-169$\\
    7&  2.20$e-279$& 1.94$e-283$ \\
 \hline
\end{tabular}

\label{table5}
\end{table}
\begin{table}[ht!]
\centering
\caption{The computational results throughout 7 iterations of an example of implementation of \emph{Test-2} with $\mathbb{K}=\mathbb{C},~n=10~\text{and}~e=6$ in precision $1024$.}
\begin{tabular}{ |c|c|c| }
 \hline
 Iteration & $4\kappa^2K^3\varepsilon\le0.094$&$\text{err}_{res}$  \\ \hline
 1 &6.86$e-3$&9.16$e-6$ \\
 2 &7.14$e-9$&9.53$e-12$ \\
  3 &9.51$e-21$&1.26$e-23$  \\
   4 &6.69$e-44$&8.92$e-47$\\
    5 & 3.77$e-90$& 5.04$e-93$ \\
    6& 2.59$e-182$& 3.45$e-185$\\
    7&  1.65$e-281$& 2.20$e-284$ \\
 \hline
\end{tabular}

\label{table6}
\end{table}
\begin{table}[ht!]
\centering
\caption{The residual error throughout 7 iterations given by the implementation of \emph{Test-2} with $\mathbb{K}=\mathbb{R}, e=3$ and $n=10, 50, 100$ in precision $1024$.}
\begin{tabular}{ |c|c|c|c| }
 \hline
 Iteration & $n=10$ & $n=50$&$n=100$ \\ \hline
 1 &2.91$e-2$ &4.57$e-3$ &1.01$e-2$ \\
 2 &7.97$e-5$ & 1.03$e-6$&1.31$e-6$ \\
  3 &4.21$e-9$ &1.69$e-11$ &3.71$e-11$ \\
   4 &1.07$e-16$ &2.42$e-23$ &1.23$e-22$ \\
    5 &3.92$e-33$ &1.18$e-44$ &1.46$e-43$ \\
    6&2.63$e-64$ &1.02$e-89$ &1.67$e-86$ \\
    7&1.71$e-128$ &3.20$e-177$ &9.01$e-172$  \\
 \hline
\end{tabular}

\label{table7}
\vspace{0.3cm}
\caption{The residual error throughout 7 iterations given by the implementation of \emph{Test-2} with $\mathbb{K}=\mathbb{C}, e=3$ and $n=10, 50, 100$ in precision $1024$.}
\begin{tabular}{ |c|c|c|c| }
 \hline
 Iteration & $n=10$ & $n=50$&$n=100$ \\ \hline
 1 &7.33$e-3$ &3.14$e-3$ &5.52$e-3$ \\
 2 &3.49$e-6$ &7.48$e-7$ &1.35$e-6$ \\
  3 &2.91$e-12$ &1.11$e-13$ &1.19$e-13$ \\
   4 &2.04$e-24$ &2.54$e-27$ &1.68$e-27$ \\
    5 &8.23$e-49$ &3.04$e-54$ &2.19$e-54$ \\
    6&1.88$e-97$ &3.41$e-108$ &1.50$e-108$ \\
    7&1.31$e-194$ &1.91$e-215$ &4.53$e-216$  \\
 \hline
\end{tabular}

\label{table8}
 \end{table}
 \begin{table}[ht!]
\centering
\caption{The residual error throughout 5 iterations given by the implementation of \emph{Test-2} with $\mathbb{K}=\mathbb{R}, e=3$ and $n=10, 20, 30$, in double precision.}
\resizebox{\textwidth}{!}{\begin{tabular}{ |c|c|c|c| }
 \hline
 Iteration & $n=10$ & $n=20$&$n=30$ \\ \hline
 1  &2.71$e-3$ &1.21$e-2$ &4.64$e-3$\\
 2 &1.36$e-6$&4.91$e-6$ &2.24$e-6$ \\
  3 &1.39$e-12$ &2.57$e-11$ &4.74$e-11$  \\
   4 &6.16$e-15$
  &8.97$e-14$
 &1.55$e-13$ \\
    5 &7.04$e-15$ & 8.09$e-14$ &1.53$e-13$  \\\hline
    \small{$\max(\|M_1-E\Sigma_{1} E^{-1}\|_{Frob}, \|M_2-E\Sigma_{2} E^{-1}\|_{Frob})$}&3.74$e-15$ &4.13$e-14$ &8.21$e-14$ \\
 \hline
\end{tabular}}

\label{newton-double1}
\end{table}
 \newpage
 \subsection{Cauchy matrix}\label{wilkinson}
 In this section we present an example for a Cauchy matrix of size $n=13$ of entries $a_{i, j}=\frac{1}{i+j},~\forall 1\le i, j\le 13$, that illustrates how the Newton-type iteration can be used to increase the accuracy of the eigenvalues. 
 We take the eigen decomposition given by the standard \textsc{Julia} method \texttt{eigen} from the package \texttt{LinearAlgebra} as an initial point of Newton sequences in \Cref{theo2} with 5 iterations. The computation is done with the precision 1024 using \texttt{ArbNumerics} package. The initial point given by \texttt{eigen} is in double precision. It is converted to the precision 1024 using \texttt{ArbNumerics} package, in order to apply Newtons iterations with this precision of 1024 bits. 
 In \Cref{cauchy-table} we report the eigenvalues given by \texttt{eigen} ($\sigma_{\texttt{eigen}}$) and the eigenvalues rounded to the double precision given by Newton-type sequence ($\sigma_{\texttt{newton}}$) initialized with \texttt{eigen}. We also report the relative error $\abs{\sigma_{\texttt{newton}}-\sigma_{\texttt{eigen}}}\over{\sigma_{\texttt{newton}}}$ in order to show the refinement amount realized by the Newton method. As we can see the matrix of this example is ill-conditioned (Cauchy matrices are in general ill-conditioned). There is a cluster of eigenvalues nearby zero. The accuracy enhancement obtained by applying  Newton-type iterations can be clearly seen in \Cref{cauchy-table}, in particular for the first four smallest eigenvalues. For instance, the smallest eigenvalue returned by \texttt{eigen} is of order $10^{-17}$ close to the second smallest eigenvalues of order $10^{-16}$. Newton-type method shows that the smallest eigenvalue of the order $10^{-19}$ yields a large relative error $\sim 39.33$. This also shows that all the eigenvalues are well-separated.      

 \begin{table}
 \caption{The relative error between $\sigma_{\texttt{eigen}}$ from the method \texttt{eigen} and $\sigma_{\texttt{newton}}$ from the Newton-type method for the Cauchy matrix $\big(\frac{1}{i+j}\big)_{1\le i, j\le 13}$.}
\begin{tabular}{ |c|c|c|c| }
 \hline
 Eigenvalue & $\sigma_{\texttt{eigen}}$ &$\sigma_{\texttt{newton}}$& ${\abs{\sigma_{\texttt{newton}}-\sigma_{\texttt{eigen}}}}\over{\sigma_{\texttt{newton}}}$ \\ \hline
 1 &\texttt{2.4030587641505818e-17} & \texttt{5.958203769841865e-19}&39.33 \\
 2 &\texttt{1.8824087522342697e-16} & \texttt{1.7156976132548192e-16}&0.09716 \\
  3 &\texttt{2.3152722725223998e-14} &\texttt{2.3178576801522747e-14} &0.00111\\
   4 &\texttt{1.9513972147589434e-12}&\texttt{1.951356013568409e-12} &2.11$e-5$ \\
    5 &\texttt{1.1466969172503778e-10} & \texttt{1.1466967568738049e-10}&1.39$e-7$ \\
    6&\texttt{4.991788233415145e-9} &\texttt{4.991788235245136e-9} &3.66$e-10$ \\
    7&\texttt{1.6668681228080362e-7} &\texttt{1.666868122813953e-7} &3.54$e-12$  \\
    8&\texttt{4.360227301207107e-6} & \texttt{4.360227301206033e-6}&2.46$e-13$  \\
    9&\texttt{9.040674871074817e-5}& \texttt{9.040674871075823e-5}&1.11$e-13$  \\
    10&\texttt{0.0014925044272821445} & \texttt{0.0014925044272821172}&1.83$e-14$  \\
    11&\texttt{0.01955788569925287} &\texttt{0.01955788569925287} &4.81$e-17$  \\
    12&\texttt{0.19958813407010345} &\texttt{0.19958813407010337}&4.64$e-16$  \\
    13&\texttt{1.3693334145989837} &\texttt{1.3693334145989824} &9.98$e-16$  \\
 \hline
\end{tabular}

\label{cauchy-table}
 \end{table}
 
\subsection{Sub-matrix iterations}

It is possible to adapt the proposed method, taking into account the condition of the eigenvalue $\sigma_i$ given by the quantity 
\begin{align*}
  \kappa (\sigma_i) & =  \max_{i \neq j} \left( 1, \frac{1}{\vert \sigma_i -
  \sigma_j \vert} \right)
  \end{align*}
Theoretical results imply that the computation of clusters of eigenvalues is ill-conditioned.
However, one can apply Theorem 3 on sub-matrices to improve the well-conditioned eigenvalues. We denote
\begin{eqnarray*}
  \delta & = & \sqrt{\frac{K \| \Delta_0 \|}{0.033}}
\end{eqnarray*}
and $p$ the index such that $\Sigma = \left( \begin{array}{cc}
  \Sigma_p & \\
  & \Sigma_{n - p}
\end{array} \right)$, $\Sigma_p = \tmop{diag} (\sigma_1, \ldots, \sigma_p)$, $\Sigma_{n-p} = \tmop{diag} (\sigma_{p+1}, \ldots, \sigma_n)$ and 
$\vert \sigma_i - \sigma_j \vert > \delta$ for all $1 \leqslant i \le p$ and $i < j \leqslant n$. We adapt Newton iteration to the block associated with the well-conditioned eigenvalues by defining the matrices $X$, $Y$ and $S$ as follows:
  \begin{eqnarray*}
    x_{i, i} & = & 0\\
    x_{i, j} & = & \left\{ \begin{array}{cc}
      \dfrac{-\delta_{i,j}+z_{i,j}\sigma_j}{\sigma_i - \sigma j} \quad & \tmop{if} \quad \vert \sigma_i -
      \sigma_j \vert > \delta\\
      0 & \tmop{otherwise}\\
    \end{array} \right.\\
    Y &=& - Z -X\\
      S &=&  \tmop{diag} (- \Delta + Z \Sigma).
  \end{eqnarray*}
\Cref{table12} (resp. \Cref{table13}) shows the residual error $\text{err}_{res}$ as in \emph{Test-1} for the Cauchy matrix of size $200$ (resp. the Rosser matrix of size 256 \cite{Rosser}) by applying the aforementioned sequences, the initial point is given by the Julia method \texttt{eigen}. The computation is done in precision 1024. 
\begin{table}[ht!]
\centering
\caption{The residual error throughout 6 iterations with the Cauchy matrix of size $200$.}
\begin{tabular}{ |c|c|c| }
 \hline
 Iteration & $p=12$, $\delta=4.51e-7$ & $p=5$, $\delta=4.51e-7$ \\ \hline
 1  &2.45$e-15$ &2.35$e-15$ \\
 2 &9.63$e-26$&3.75$e-29$ \\
  3 &1.56$e-36$ &1.21$e-53$  \\
   4 &1.54$e-45$  &1.81$e-83$ \\
    5 &1.15$e-54$ & 3.49$e-110$   \\
     6 &5.08$e-64$ & 8.67$e-137$   \\\hline
\end{tabular}
\label{table12}
\end{table}
\begin{table}[ht!]
\centering
\caption{The residual error throughout 6 iterations with the Rosser matrix of size $256$.}
\begin{tabular}{ |c|c|c| }
 \hline
 Iteration & $p=11$, $\delta=1.11e-3$ & $p=5$, $\delta=1.11e-3$ \\ \hline
 1  &7.15$e-12$ &1.65$e-12$ \\
 2 &7.18$e-20$&7.18$e-20$ \\
  3 &1.42$e-40$ &1.81$e-41$  \\
   4 &1.73$e-53$  &1.56$e-85$ \\
    5 &7.17$e-66$ & 1.75$e-119$   \\
     6 &8.79$e-79$ &  8.11$e-153$   \\\hline
\end{tabular}
\label{table13}
\end{table}

\section{Conclusion}
Taking a Newton approach towards systems of equations describing the simultaneous diagonalization problem of diagonalizable matrices, leads us to new algorithmic insights. We exhibit a Newton-type method without solving a linear system at each step as is the case of a classical Newton method. The numerical experiments corroborate the quadratic convergence predicted by the theoretical analysis.
\\
 We focused on the regular case. Some improvements and extensions can be considered, such as the treatment of clusters of eigenvalues. Another direction that can be explored, is the construction of higher-order methods.

\section{Conflict of Interest}
The authors declare no conflicting interest that is directly or indirectly related to the work submitted for publication.
\bibliography{simdiag1}

\begin{thebibliography}{10}

\bibitem{absil1}
P.-A. Absil and K.A. Gallivan.
\newblock Joint diagonalization on the oblique manifold for independent
  component analysis.
\newblock In {\em 2006 IEEE International Conference on Acoustics Speech and
  Signal Processing Proceedings}, volume~5, pages V--V, 2006.

\bibitem{AbsMahSep2008}
P.-A. Absil, R.~Mahony, and R.~Sepulchre.
\newblock {\em Optimization Algorithms on Matrix Manifolds}.
\newblock Princeton University Press, Princeton, NJ, 2008.

\bibitem{Afsari}
B.~Afsari.
\newblock Sensitivity analysis for the problem of matrix joint diagonalization.
\newblock {\em SIAM Journal on Matrix Analysis and Applications},
  30:1148--1171, 2008.

\bibitem{lineargrp}
E.~Andruchow, G.~Larotonda, L.~Recht, and A.~Varela.
\newblock The left invariant metric in the general linear group.
\newblock {\em Journal of Geometry and Physics}, 86:241--257, 2014.

\bibitem{bouchard}
Florent Bouchard, Bijan Afsari, Jérôme Malick, and Marco Congedo.
\newblock Approximate joint diagonalization with {R}iemannian optimization on
  the general linear group.
\newblock {\em SIAM Journal on Matrix Analysis and Applications},
  41(1):152--170, 2020.

\bibitem{bro97}
Rasmus Bro.
\newblock Parafac tutorial and applications.
\newblock {\em Chemometrics and Intelligent Laboratory Systems},
  38(2):149--171, 1997.

\bibitem{BBM92}
Angelika Bunse-Gerstner, Ralph Byers, and Volker Mehrmann.
\newblock A chart of numerical methods for structured eigenvalue problems.
\newblock {\em SIAM Journal on Matrix Analysis and Applications},
  13(2):419--453, 1992.

\bibitem{BBM93}
Angelika Bunse-Gerstner, Ralph Byers, and Volker Mehrmann.
\newblock Numerical methods for simultaneous diagonalization.
\newblock {\em SIAM Journal on Matrix Analysis and Applications},
  14(4):927--949, 1993.

\bibitem{bu13}
Peter B{\"u}rgisser, Michael Clausen, and Mohammad~A Shokrollahi.
\newblock {\em Algebraic complexity theory}, volume 315.
\newblock Springer Science \& Business Media, 2013.

\bibitem{blind}
Jean-François Cardoso and Antoine Souloumiac.
\newblock Blind beamforming for non gaussian signals.
\newblock {\em IEE Proceedings-F}, 140:362--370, 1993.

\bibitem{blind1}
Jean-François Cardoso and Antoine Souloumiac.
\newblock Jacobi angles for simultaneous diagonalization.
\newblock {\em SIAM Journal on Matrix Analysis and Applications},
  17(1):161--164, 1996.

\bibitem{ca-ch}
J~Douglas Carroll and Jih-Jie Chang.
\newblock Analysis of individual differences in multidimensional scaling via an
  n-way generalization of “{E}ckart-{Y}oung” decomposition.
\newblock {\em Psychometrika}, 35(3):283--319, 1970.

\bibitem{ci-ma}
Andrzej Cichocki, Danilo Mandic, Lieven De~Lathauwer, Guoxu Zhou, Qibin Zhao,
  Cesar Caiafa, and Huy~Anh Phan.
\newblock Tensor decompositions for signal processing applications: From
  two-way to multiway component analysis.
\newblock {\em IEEE signal processing magazine}, 32(2):145--163, 2015.

\bibitem{BSS}
Pierre Comon and Christian Jutten.
\newblock {\em Handbook of Blind Source Separation: Independent Component
  Analysis and Applications}.
\newblock Academic Press, Inc., USA, 1st edition, 2010.

\bibitem{CoxUsingalgebraicgeometry2005}
David~A. Cox, John~B. Little, and Donal O'Shea.
\newblock {\em Using Algebraic Geometry}.
\newblock Number 185 in Graduate Texts in Mathematics. Springer, New York, 2nd
  edition, 2005.

\bibitem{lath06}
Lieven De~Lathauwer.
\newblock A link between the canonical decomposition in multilinear algebra and
  simultaneous matrix diagonalization.
\newblock {\em SIAM journal on Matrix Analysis and Applications},
  28(3):642--666, 2006.

\bibitem{alg2}
S.C. Douglas.
\newblock Self-stabilized gradient algorithms for blind source separation with
  orthogonality constraints.
\newblock {\em IEEE Transactions on Neural Networks}, 11(6):1490--1497, 2000.

\bibitem{elkadi_introduction_2007}
Mohamed Elkadi and Bernard Mourrain.
\newblock {\em Introduction {\`a} la r{\'e}solution des syst{\`e}mes
  polynomiaux}, volume~59 of {\em Math{\'e}matiques et Applications}.
\newblock Springer, 2007.

\bibitem{flury1994simultaneous}
Bernard~D Flury and Beat~E Neuenschwander.
\newblock Simultaneous diagonalization algorithms with applications in
  multivariate statistics.
\newblock In {\em Approximation and computation: A Festschrift in honor of
  Walter Gautschi}, pages 179--205. Springer, 1994.

\bibitem{blind3}
Bernhard~N. Flury and Walter Gautschi.
\newblock An algorithm for simultaneous orthogonal transformation of several
  positive definite symmetric matrices to nearly diagonal form.
\newblock {\em SIAM Journal on Scientific and Statistical Computing},
  7(1):169--184, 1986.

\bibitem{chatelin}
Chatelin Françoise.
\newblock Simultaneous newton's iteration for the eigenproblem.
\newblock {\em Computing}, 5:67--74, 1984.

\bibitem{Hoeven}
J.~V.~D. Hoeven and B.~Mourrain.
\newblock Efficient certification of numeric solutions to eigenproblems.
\newblock In {\em MACIS}, 2017.

\bibitem{JVDHYak}
Joris van~der Hoeven and Jean-Claude Yakoubsohn.
\newblock Certified singular value decomposition.
\newblock Technical Report HAL 01941987, 2018.

\bibitem{HJ91}
Roger~A. Horn and Charles~R. Johnson.
\newblock {\em Topics in Matrix Analysis}.
\newblock Cambridge University Presss, Cambridge, 1991.

\bibitem{HJ12}
Roger~A Horn and Charles~R. Johnson.
\newblock {\em Matrix analysis}.
\newblock Cambridge University Press, Cambridge, 2012.

\bibitem{jiang2016simultaneous}
Rujun Jiang and Duan Li.
\newblock Simultaneous diagonalization of matrices and its applications in
  quadratically constrained quadratic programming.
\newblock {\em SIAM Journal on Optimization}, 26(3):1649--1668, 2016.

\bibitem{JM}
M.~Joho and K.~Rahbar.
\newblock Joint diagonalization of correlation matrices by using {N}ewton
  methods with application to blind signal separation.
\newblock {\em Sensor Array and Multichannel Signal Processing Workshop
  Proceedings, 2002}, pages 403--407, 2002.

\bibitem{joho2008newton}
Marcel Joho.
\newblock Newton method for joint approximate diagonalization of positive
  definite hermitian matrices.
\newblock {\em SIAM Journal on Matrix Analysis and Applications},
  30(3):1205--1218, 2008.

\bibitem{joho2002joint}
Marcel Joho and Kamran Rahbar.
\newblock Joint diagonalization of correlation matrices by using newton methods
  with application to blind signal separation.
\newblock In {\em Sensor Array and Multichannel Signal Processing Workshop
  Proceedings, 2002}, pages 403--407. IEEE, 2002.

\bibitem{jolli}
Ian Jolliffe.
\newblock {\em Principal Component Analysis}, pages 1094--1096.
\newblock Springer Berlin Heidelberg, Berlin, Heidelberg, 2011.

\bibitem{laub1987computation}
Alanj Laub, MICHAELT Heath, C~Paige, and R~Ward.
\newblock Computation of system balancing transformations and other
  applications of simultaneous diagonalization algorithms.
\newblock {\em IEEE Transactions on Automatic Control}, 32(2):115--122, 1987.

\bibitem{luc-alb}
Xavier Luciani and Laurent Albera.
\newblock Canonical polyadic decomposition based on joint eigenvalue
  decomposition.
\newblock {\em Chemometrics and Intelligent Laboratory Systems}, 132:152--167,
  2014.

\bibitem{MAHONY199667}
R.E. Mahony.
\newblock The constrained newton method on a lie group and the symmetric
  eigenvalue problem.
\newblock {\em Linear Algebra and its Applications}, 248:67--89, 1996.

\bibitem{mes-bel}
Ammar Mesloub, Adel Belouchrani, and Karim Abed-Meraim.
\newblock Efficient and stable joint eigenvalue decomposition based on
  generalized givens rotations.
\newblock In {\em 2018 26th European Signal Processing Conference (EUSIPCO)},
  pages 1247--1251. IEEE, 2018.

\bibitem{alg3}
M.~Nikpour, J.~Manton, and G.~Hori.
\newblock Algorithms on the {S}tiefel manifold for joint diagonalisation.
\newblock {\em 2002 IEEE International Conference on Acoustics, Speech, and
  Signal Processing}, 2:II--1481--II--1484, 2002.

\bibitem{alg4}
Yasunori Nishimori and Shotaro Akaho.
\newblock Learning algorithms utilizing quasi-geodesic flows on the {S}tiefel
  manifold.
\newblock {\em Neurocomput.}, 67:106–135, August 2005.

\bibitem{alg1}
Kamran Rahbar and James~P. Reilly.
\newblock Geometric optimization methods for blind source separation of
  signals.
\newblock In {\em in Proc. ICA}, pages 375--380, 2000.

\bibitem{Rosser}
B.~Rosser, C.~Lanczos, M.R. Hestenes, and W.~Karush.
\newblock Separation of close eigen-values of a real symmetric matrix.
\newblock {\em Journal of Research of the National Bureauof Standards}, 47,
  1950.

\bibitem{sato2017riemannian}
Hiroyuki Sato.
\newblock Riemannian newton-type methods for joint diagonalization on the
  stiefel manifold with application to independent component analysis.
\newblock {\em Optimization}, 66(12):2211--2231, 2017.

\bibitem{si-bro}
Nicholas~D Sidiropoulos and Rasmus Bro.
\newblock On the uniqueness of multilinear decomposition of n-way arrays.
\newblock {\em Journal of chemometrics}, 14(3):229--239, 2000.

\bibitem{si-lath}
Nicholas~D Sidiropoulos, Lieven De~Lathauwer, Xiao Fu, Kejun Huang, Evangelos~E
  Papalexakis, and Christos Faloutsos.
\newblock Tensor decomposition for signal processing and machine learning.
\newblock {\em IEEE Transactions on Signal Processing}, 65(13):3551--3582,
  2017.

\bibitem{so-lath17-1}
Mikael S{\o}rensen and Lieven De~Lathauwer.
\newblock Multidimensional harmonic retrieval via coupled canonical polyadic
  decomposition—part i: Model and identifiability.
\newblock {\em IEEE Transactions on Signal Processing}, 65(2):517--527, 2016.

\bibitem{so-lath17-2}
Mikael S{\o}rensen and Lieven De~Lathauwer.
\newblock Multidimensional harmonic retrieval via coupled canonical polyadic
  decomposition—part ii: Algorithm and multirate sampling.
\newblock {\em IEEE Transactions on Signal Processing}, 65(2):528--539, 2016.

\bibitem{so-do}
Mikael S{\o}rensen, Ignat Domanov, and Lieven De~Lathauwer.
\newblock Coupled canonical polyadic decompositions and multiple shift
  invariance in array processing.
\newblock {\em IEEE Transactions on Signal Processing}, 66(14):3665--3680,
  2018.

\bibitem{sovan}
Mikael S{\o}rensen, Frederik Van~Eeghem, and Lieven De~Lathauwer.
\newblock Blind multichannel deconvolution and convolutive extensions of
  canonical polyadic and block term decompositions.
\newblock {\em IEEE Transactions on Signal Processing}, 65(15):4132--4145,
  2017.

\bibitem{vollgraf2006quadratic}
Roland Vollgraf and Klaus Obermayer.
\newblock Quadratic optimization for simultaneous matrix diagonalization.
\newblock {\em IEEE Transactions on Signal Processing}, 54(9):3270--3278, 2006.

\bibitem{objfunc}
Wenwu Wang, Saeid Sanei, and Jonathon Chambers.
\newblock Penalty function-based joint diagonalization approach for convolutive
  blind separation of nonstationary sources, Jan 2005.

\bibitem{Weyl1912}
H.~Weyl.
\newblock Das asymptotische verteilungsgesetz der eigenwerte linearer
  partieller differentialgleichungen (mit einer anwendung auf die theorie der
  hohlraumstrahlung).
\newblock {\em Mathematische Annalen}, 71:441--479, 1912.

\bibitem{inacc}
A.~{Yeredor}.
\newblock Non-orthogonal joint diagonalization in the least-squares sense with
  application in blind source separation.
\newblock {\em IEEE Transactions on Signal Processing}, 50(7):1545--1553, 2002.

\end{thebibliography}


\end{document}